\documentclass{amsart}
\usepackage{graphicx, amssymb, amsmath,amsthm,mathtools,verbatim} 
\usepackage[T1]{fontenc} 
\usepackage[utf8]{inputenc} 
\usepackage[sc]{mathpazo}
\usepackage[small]{eulervm} 
\usepackage[dvipsnames]{xcolor}
\usepackage{enumitem}
\usepackage[shadow,loadshadowlibrary]{todonotes}

\DeclarePairedDelimiter{\inner}{\langle}{\rangle}

\newcommand{\ZZ}{\mathbb{Z}}
\newcommand{\RR}{\mathbb{R}}

\newcommand{\LO}{\mathrm{LO}}
\newcommand{\ooo}{\mathfrak{o}}

\newtheorem{theorem}{Theorem}[section]
\newtheorem{lemma}[theorem]{Lemma}
\newtheorem{corollary}[theorem]{Corollary}
\newtheorem{proposition}[theorem]{Proposition}
\newtheorem{question}[theorem]{Question}

\newtheorem{conjecture}[theorem]{Conjecture}
\theoremstyle{definition}
\newtheorem{definition}[theorem]{Definition}

\newtheorem{example}[theorem]{Example}
\theoremstyle{remark}

\title[Non-left orderability of Dehn fillings]{A combinatorial approach to obstructing left-orderability of Dehn fillings}

\author[A.~Clay, J.~Lu, and T.~Staruch]{Adam Clay, Junyu Lu, and Tjay Staruch}

\address{Department of Mathematics, 313 St. Paul's College, 
University of Manitoba, Winnipeg, MB, R3T 2M6}

\email{Adam.Clay@umanitoba.ca}
\email{luj9@myumanitoba.ca}
\email{starucht@myumanitoba.ca}
 \subjclass[2010]{57M05, 57M99, 06F15} 
 \keywords{Fundamental group, Dehn surgery, left-orderability, L-space conjecture}
\thanks{Adam Clay
    was partially supported by NSERC grant RGPIN-2020-05343.}

\date{\today}

\begin{document}

\begin{abstract}

    We introduce the conjugate-slope property of knots, a variant of Nie's Property (D), which can be used to study non-left-orderability of fundamental groups arising from Dehn filling.  We show that this property holds for any knot in \(S^3\) admitting a diagram whose negative crossings can be ``controlled'' in a precise combinatorial sense.  The conjugate-slope property also places strong restrictions on the kinds of left-orderings that the knot group can support, and implies that the image of the meridian is generalised torsion in the fundamental group of many of the knot's Dehn fillings.
\end{abstract}

\maketitle

\section{Introduction}

For a rational homology \(3\)-sphere, the L-space conjecture posits that having a left-orderable fundamental group, supporting a co-orientable taut foliation, and not being a Heegaard Floer homology L-space are equivalent \cite{BGW13, Juh15}.  Progress in the study of Heegaard Floer homology, co-orientable taut foliations, and left-orderings of fundamental groups has proceeded at different rates. As a result, certain \(3\)-manifolds and topological operations whose behaviours are well understood with respect to one of these objects may remain poorly understood with respect to the others.

Notably, Dehn surgery along a knot in \(S^3\) is well understood with respect to Heegaard Floer homology and L-spaces.  For a nontrivial knot \(K\) in \(S^3\), either all nontrivial Dehn fillings of the complement yield non-L-spaces, or the Dehn fillings which yield non-L-spaces are precisely the ones with filling slope less than \(2g(K)-1\), where \(g(K)\) is the Seifert genus \cite{OS05}.  Knots satisfying the latter property are called L-space knots.  In contrast, the understanding of left-orderability of fundamental groups of Dehn fillings is rather piecemeal, with differing approaches yielding only partial results \cite{BGW13, CW11, CW13, CD18, BH19}.

One of the challenges in tackling the non-left-orderability aspect of Dehn filling is that most results have been obtained through rather ad-hoc approaches.  A typical approach is to single out a family of L-space knots for study, and to perform computations particular to a specific presentation of their knot groups in order to obstruct left-orderability of the fundamental groups of the Dehn fillings.  It would be better if one's approach did not rely on the computational properties of a particular group presentation, but instead was immediately connected to the topological and geometric properties of the knot.

A step in this direction is Nie's Property (D), which potentially characterises L-space knots in terms of two properties that we will refer to as parts (a) and (b) (\cite{Nie20}, and also see Definition \ref{propertyD}).  The strength of this approach is that each property alone is \emph{a priori} weaker than the property of being an L-space knot, allowing for a two-step approach to the problem of group-theoretically characterising L-space knots.  The families of knots satisfying Property (D) part (a) and Property (D) part (b) could each properly contain the collection of all L-space knots, with the collection of L-space knots being their intersection. Property (D) also codifies an approach which, at least in principle, may be carried out without reference to a particular presentation of the knot group.

With this in mind, we present a modified version of Nie's Property (D), consisting of two parts that we call ``the conjugate-slope property at \(n\)'', where \(n \in \mathbb{N}\), and the ``cofinal detection property'' (see Definitions \ref{conjugate-slope} and \ref{def:cofinal}).  Our conjugate-slope property is very similar to but weaker than Nie's Property (D) part (b), and we conjecture that our conjugate-slope property at \(2g(K)-1\) is equivalent to Property (D) part (b).   The cofinal detection property is also weaker than Property (D) part (a).  We show that our two properties each define a family that is not equal to the collection of L-space knots, and we conjecture that the knots satisfying both are precisely the L-space knots.

Our aim is to connect these newly defined group-theoretic properties with topological and geometric properties of knots.  With this in mind, this manuscript primarily focuses on the conjugate-slope property and its amenability to investigation via knot diagrams.  We develop techniques for identifying families of knots that have this property, and study its impact on the types of orderings that can be supported by the knot group.  We show:

\begin{theorem}
    Suppose that \(K \subset S^3\) is a nontrivial knot that admits a diagram with two or fewer negative crossings.  Then \(K\) has the conjugate-slope property at some \(n \geq 2g(K) -1\).
\end{theorem}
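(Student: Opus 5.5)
The plan is to work directly with the Wirtinger presentation of a diagram $D$ of $K$ having at most two negative crossings. I will read the conjugate-slope property at $n$ (Definition \ref{conjugate-slope}) as the requirement that $\mu^{n}\lambda$, with $\mu$ a meridian and $\lambda$ the commuting longitude, lies in the subsemigroup of $\pi_1(S^3\setminus K)$ generated by the conjugates of $\mu$. Abelianising forces such an expression to be a product of exactly $n$ conjugates of $\mu$, so $n$ is determined by it.

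\textbf{Step 1: write the longitude from the diagram.} All Wirtinger generators $x_1,\dots,x_m$ are conjugates of $\mu=x_1$. Traverse $D$ from the arc $x_1$. The blackboard longitude is the product, in order, of $x_{j}^{\epsilon}$ over the undercrossings met, where $x_j$ is the overstrand and $\epsilon$ is the crossing sign. The Seifert longitude is then $\lambda=\mu^{-w(D)}\lambda_{bb}$, where $w(D)$ is the writhe.

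When every crossing is positive, $\lambda_{bb}$ is already a product of $c(D)$ conjugates of $\mu$. Hence $\mu^{n}\lambda$ is such a product with $n=w(D)=c(D)$; one has to track the sign convention for $\mu^{\pm w}$ here. For a positive diagram, Seifert's algorithm gives a minimal genus surface, so $2g(K)-1=c(D)-s(D)$, where $s(D)$ is the number of Seifert circles. Thus $n=c(D)\ge 2g(K)-1$. I expect this to be the template, possibly refined by cancelling one factor per Seifert circle to get closer to $2g-1$; only the inequality is needed.

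\textbf{Step 2: absorb the negative crossings.} Each negative crossing contributes a factor $x_j^{-1}$ to $\lambda_{bb}$ and lowers the writhe by one. The goal is to eliminate each inverse using Wirtinger relations. At a negative crossing, the relation expresses the outgoing understrand as $x_j^{-1}x_ix_j$ or $x_jx_ix_j^{-1}$. I will choose the basepoint of the traversal and the order of reading so that each $x_j^{-1}$ sits next to a conjugate of $x_j$ and can be cancelled or rewritten. Examples of such rewrites are replacing $x_j^{-1}\,g x_j g^{-1}$ by a commutator, or inserting $\mu^{k}$ and using that $\mu$ commutes with $\lambda$, in order to move the stray $\mu^{\pm1}$ factors to the front. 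Each inverse removed costs at most a bounded shift in $n$. I will then check, using Seifert's algorithm on the diagram with $p$ positive and $q\le 2$ negative crossings, that the resulting exponent still satisfies $n\ge 2g(K)-1$. The bound $2g(K)\le c(D)-s(D)+1$ always holds, so one needs $n\ge p+q-s(D)$, which leaves a slack of about $s(D)$ to spend on the corrections.

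\textbf{Step 3: case analysis, which is the main obstacle.} With one negative crossing there is essentially one configuration. With two there are several, according to whether the two negative crossings share an overstrand, are adjacent along the knot, or lie between the same pair of Seifert circles. Before analysing cases, I will use Reidemeister moves and isotopy to reduce $D$ to a normal form in which no negative crossing can be removed. In the hardest case the two inverse letters interleave in $\lambda_{bb}$ and cannot be cancelled locally. There I would try writing $\mu^{n}\lambda$ as a product of conjugates of $\mu$ and of $\mu^{-1}$, and then pairing each $\mu^{-1}$-conjugate with a neighbouring $\mu$-conjugate via the identity $a^{-1}b=(a^{-1}ba)\,a^{-1}$, pushing inverses toward the end where they cancel against added powers of $\mu$. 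Nontriviality of $K$ is used to guarantee $p\ge 1$, and indeed enough positive crossings to supply these cancellations.
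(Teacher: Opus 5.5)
\textbf{There is a genuine gap: the case of two negative crossings with distinct over-arcs is never handled.}

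Your Steps 1--2 do cover two cases. Step 1 handles positive diagrams. Step 2 handles diagrams whose negative crossings share one over-arc, once you take $\mu$ to be that arc's Wirtinger generator and read the longitude starting on that arc (a choice you only gesture at). Then the inverse letters are literally $\mu^{-1}$. Because $\mu$ commutes with $\lambda$, they are absorbed as $\mu^{t}(\cdots)\mu^{k-t}$, giving $\mu^{v}\lambda\in N(\mu)$, where $v$ is the number of positive crossings.

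The case you defer to Step 3 is two negative crossings with over-arcs $A_i\neq A_j$. Your proposed mechanism fails there. Take $\mu=x_i$ and absorb the $\mu^{-1}$; you are left with $\lambda\mu^{v-1}=Q\,x_j^{-1}$ for some $Q\in N(\mu)$, where $x_j=g^{-1}\mu g$. Pushing inverse letters with $a^{-1}b=(a^{-1}ba)a^{-1}$ only relocates this $x_j^{-1}$. It cannot be cancelled by adding powers of $\mu$: it is a conjugate of $\mu^{-1}$, not $\mu^{-1}$ itself, and nothing forces $x_j^{-1}\mu^{m}$ into $N(\mu)$. Moving the basepoint to $A_j$ just trades it for a stray conjugate of $\mu^{-1}$. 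So you have no argument that $\mu^{n}\lambda\in N(\mu)$ in this case, and the paper does not prove that either.

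\textbf{The missing idea is that you have strengthened the definition.} The property at $n$ only asks that $N(\mu^n\lambda)\cap N(\mu)\neq\emptyset$. That is, some product of conjugates of $\mu^n\lambda$ must lie in $N(\mu)$, not $\mu^n\lambda$ itself; your abelianisation count should therefore be $n$ times the number of such factors.

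The paper uses exactly this freedom:
\begin{itemize}
\item Reading from $A_i$ gives $\lambda\mu^{v-1}x_j\in N(\mu)$.
\item Reading from $A_j$ gives $g^{-1}(\lambda\mu^{v-1})g\,\mu\in N(\mu)$.
\item Multiply the two, substitute $x_jg^{-1}=g^{-1}\mu$, and conjugate by $\mu$. The product becomes $(\mu^{v}\lambda)\,g^{-1}(\mu^{v}\lambda)g\in N(\mu)$.
\end{itemize}
Each reading's stray inverse letter is absorbed into the other factor. This is Theorem \ref{thm:con_slope_property} with two batches of size one.

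Two smaller points. Your Reidemeister normal-form reduction is unnecessary. So is the Seifert-circle bookkeeping for $n\geq 2g(K)-1$: the property at $n$ passes to every $m>n$ by the proposition following Definition \ref{conjugate-slope}, and Corollary \ref{cor:conj_slope_bound} shows any such $n$ already satisfies $n\geq 2g(K)-1$.
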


More generally, any knot admitting a diagram whose negative crossings satisfy a technical combinatorial condition will have the conjugate-slope property at some \(n \geq 2g-1\). Knots with two or fewer negative crossings trivially satisfy the required condition.  See Theorem \ref{thm:con_slope_property} and Corollary \ref{cor:conj_slope_bound}.  In particular, all L-space knots in the SnapPy census have the conjugate-slope property, as do many infinite families of L-space knots.

The conjugate-slope property also restricts the kinds of orderings that can be supported by the knot group.  Of particular importance are the boundary-cofinal left-orderings, that is, left-orderings of the knot group relative to which the peripheral subgroup is unbounded \cite{BC23}.  Conjecturally, all orderings of L-space knot groups are of this kind \cite[Theorem 1.2]{BC23}, and every knot group supports uncountably many orderings of this type.  The conjugate-slope property places strong restrictions on the possible boundary behaviour of boundary-cofinal left-orderings.

\begin{theorem}
    If \(K \subset S^3\) has the conjugate-slope property at \(n\), then no boundary-cofinal ordering of the knot group weakly order-detects a slope greater than \(n\).
\end{theorem}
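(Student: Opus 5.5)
The plan is to argue by contradiction, using the relation supplied by the conjugate-slope property at \(n\) (Definition \ref{conjugate-slope}) as a positivity statement. I take that relation to express the slope-\(n\) peripheral element \(\mu^{n}\lambda\) (or a fixed power of it) as a product of conjugates of the meridian \(\mu\) in \(G = \pi_1(S^3 \setminus K)\). I have not seen the exact wording of the definition, so I will read off its precise form and adjust signs and exponents accordingly. Suppose \(<\) is a boundary-cofinal left-ordering of \(G\) that weakly order-detects a slope \(r = p/q > n\). Replacing \(<\) by its opposite ordering if necessary, assume \(\mu > 1\).

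First I will unpack weak order-detection for a boundary-cofinal ordering. Since the peripheral subgroup \(P = \langle \mu, \lambda\rangle \cong \ZZ^2\) is unbounded, the restriction of \(<\) to \(P\) is an ordering of \(\ZZ^2\). Such an ordering is determined by a line through the origin, and that line is the detected slope. Concretely, after identifying \(\mu^a\lambda^b\) with \((a,b)\), the positive cone of \(P\) is, up to the boundary line, one of the two open half-planes cut out by the line of slope \(r\). Because \(\mu > 1\), it is the half-plane containing \((1,0)\). Since \(r > n\), the point \((n,1)\) representing \(\mu^{n}\lambda\), or \(\mu^{-n}\lambda^{-1}\) depending on the sign conventions, lies strictly on the negative side. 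Hence the peripheral element appearing in the conjugate-slope relation is strictly negative in \(<\), and so are all of its positive powers.

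Second, I will compare this with the relation itself. Each conjugate \(g\mu g^{-1}\) is positive, because conjugation preserves the sign of an element in any left-ordering (\(x>1\) implies \(gxg^{-1} > 1\), since \(g < gx\) gives \(1 < gxg^{-1}\)). A product of positive elements is positive. So the relation forces \(\mu^{n}\lambda\) (or the relevant power) to be positive, which contradicts the previous paragraph.

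I expect the main obstacle to be getting the half-plane orientation exactly right: whether \(r>n\), rather than \(r<n\), places the slope-\(n\) element on the side opposite to \(\mu\). This depends on the sign convention for \(\lambda\) in the definition and on which of \(\mu^{n}\lambda\) or \(\mu^{n}\lambda^{-1}\) the relation involves. I will check it against the trefoil, where \(n=1=2g-1\) and the orderings coming from \(\widetilde{SL_2}\)-type actions detect only slopes \(\le 1\).

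A second subtlety is the boundary case where the detected line passes through lattice points. There the ordering on \(P\) is lexicographic, and elements on the line may be infinitesimal relative to others. For \(r \neq n\), however, \((n,1)\) is not on the line, so strict negativity still holds. I also need cofinality only to guarantee that \(<\) restricted to \(P\) genuinely determines a slope in the sense of weak detection. If the definition of weak detection instead uses "slope elements are bounded by the cofinal peripheral elements", I will first convert it to the half-plane description above, via the lemma from \cite{BC23} on cofinal orderings.
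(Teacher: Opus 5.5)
Your half-plane computation is correct: if the detected slope \(r\) lies in \((n,\infty)\), then \(\mu\) and \(\mu^n\lambda\) have opposite signs. The gap is the step the whole argument rests on, namely that ``conjugation preserves the sign of an element in any left-ordering.'' This is false. From \(x>1\) you get \(gx>g\), but passing to \(gxg^{-1}>1\) means right-multiplying by \(g^{-1}\), which is only legitimate in a bi-ordering. For a counterexample, take the left-orderable Klein bottle group \(\langle a,b \mid a^{-1}ba=b^{-1}\rangle\): there \(b\) is conjugate to \(b^{-1}\), so they have opposite signs.

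The gap bites twice. The actual definition is \(N(\mu^n\lambda)\cap N(\mu)\neq\emptyset\): a product of conjugates of \(\mu^n\lambda\) equals a product of conjugates of \(\mu\). So you need sign-invariance under conjugation both for \(\mu\) and for \(\mu^n\lambda\). Notice also that your argument never uses boundary-cofinality essentially. Every left-ordering of \(G\) restricts to an ordering of \(\pi_1(\partial M)\cong\ZZ^2\) and so determines a slope; cofinality is not needed for that. That is a sign something is missing, because cofinality is exactly the hypothesis that rescues the conjugation step.

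The repair is the paper's argument (Lemma \ref{lem:cofinal_products} and Proposition \ref{prop:cofinal_not_allowed}):
\begin{enumerate}
\item For \(r\in(n,\infty)\), neither \(\mu\) nor \(\mu^n\lambda\) lies on the detected line. Hence some power of each exceeds any given peripheral element, so each is cofinal in \(\pi_1(\partial M)\), and therefore cofinal in \(G\) because \(\pi_1(\partial M)\) is \(\ooo\)-cofinal.
\item For a cofinal element, the sign is invariant under conjugation, products and roots. The paper sees this via the dynamic realisation \(\rho\): a positive cofinal \(x\) satisfies \(\rho(x)(t)>t\) for all \(t\), and this property is preserved by all three operations. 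Elementarily: if \(x>1\) is cofinal and \(xg<g\), then \(x^mg<g\) for all \(m\ge 1\). But choosing \(k\) with \(x^{-k}\le g\le x^k\) gives \(x^{2k+1}g\ge x^{k+1}>g\), a contradiction.
\item So every conjugate of \(\mu\) has the sign of \(\mu\), and every conjugate of \(\mu^n\lambda\) has the opposite sign. The two sides of the defining relation then have opposite signs, which is the contradiction.
\end{enumerate}
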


See \cite{BC23} and the background section below for the definition of weak order-detection, and Proposition \ref{prop:cofinal_not_allowed}.  Combined with the cofinal detection property, this is exactly the result needed to obstruct left-orderability of the fundamental groups arising from Dehn filling, see Theorem \ref{thm:nonLO}.

\subsection{Organisation of the paper}
In Section \ref{sec:background} we introduce left-orderings, slope detection, and other definitions essential to our analysis of orderings on knot complements.  Section \ref{sec:con_slope} introduces the conjugate-slope property and shows that many families of knots have the conjugate-slope property.  Section \ref{sec:weak_detection} introduces Property (D), \(r\)-decay, the cofinal detection property, and investigates weak order-detection of slopes and the relationships between these properties.  In  Section \ref{sec:consequences} we show how to obstruct left-orderability using the conjugate-slope property, and show how the conjugate-slope property can be used to produce generalised torsion in the fundamental groups of Dehn-filled manifolds.
\section{Background}
\label{sec:background}

A \emph{left-ordering} \(\ooo\) on a non-trivial group \(G\) is defined by a total order \(<_\ooo\) on \(G\) such that \(a<_\ooo b\) implies \(ga<_\ooo gb\) for all elements \(g,a,b\in G\). The \emph{positive cone} associated with a left-ordering \(\ooo\) is the set  \(P(\ooo)=\{g\in G\mid g>_\ooo id\}\), which partitions the group as \(G=P(\ooo)\sqcup P(\ooo)^{-1}\sqcup\{id\}\).  We call the elements of \(P(\ooo)\) (resp. \(P(\ooo)^{-1}\)) \emph{positive} (resp. \emph{negative}).  Conversely, given a semigroup \(P \subset G\) satisfying \(G=P\sqcup P^{-1}\sqcup\{id\}\), we can specify a left-ordering \(\ooo\) by taking \(a<_\ooo b\) if \(a^{-1}b\in P\) for \(a,b\in G\). Hence, every left-ordering is naturally identified with its positive cone.

Given a left-ordering \(\ooo\) on \(G\), a subset \(C\subset G\) is said to be \(\ooo\)-\emph{convex} if for any \(a, b \in C\) and \(g \in G\), the condition \(a<_\ooo g<_\ooo b\) implies \(g\in C\). Given a subset \(A\subset G\), the \(\ooo\)-convex hull of \(A\) is defined as \[C(A)=\{g\in G\mid a\leq_\ooo g\leq_\ooo b \mbox{ for some }a,b\in A\}.\] A subset \(A\) is \(\ooo\)-\emph{cofinal} if \(C(A)=G\), and an element \(g\in G\) is \(\ooo\)-\emph{cofinal} if \(\inner{g}\) is \(\ooo\)-cofinal.

Let \(\LO(G)\) denote the set of all left-orderings on \(G\). When equipped with the Sikora topology \cite{Sikora04}, \(\LO(G)\) turns out to be a compact, Hausdorff and totally disconnected space. There is also a canonical action of \(G\) on \(\LO(G)\) by homeomorphisms via setting \[P(g\cdot\ooo)=gP(\ooo)g^{-1}.\] Given a subset \(\mathcal{N}\subset \LO(G)\), we say \(\mathcal{N}\) is \emph{normal} if \(\mathcal{N}\) is \(G\)-invariant, meaning \(g\cdot \ooo\in\mathcal{N}\) for all \(g\in G\) and \(\ooo\in\mathcal{N}\).

An important example in the study of order-detection is the case \(G=\ZZ^2\), which we identify as the lattice points in \(H_1(\ZZ^2;\RR)=\ZZ^2\otimes \RR=\RR^2\). The structural properties of positive cones imply that any left-ordering \(\ooo\) on \(\ZZ^2\) determines a line \(L(\ooo)\subset \RR^2\) passing through the origin, such that elements strictly on one side of \(L(\ooo)\) are all positive and those on the other side are negative. Consequently, a left-ordering \(\ooo\) on \(\ZZ^2\) determines a slope-specifically, the slope of \(L(\ooo)\), denoted by \([L(\ooo)]\).

By a \emph{slope} on a torus \(T\), we mean an element in the projective space of \(H_1(T;\RR)\). We use \([\gamma]\) to denote the slope corresponding to a nonzero element \(\gamma\in H_1(T;\RR)\). Identifying \(H_1(T; \mathbb{Z})\) with the lattice points in \(H_1(T; \RR)\), we call \([\gamma]\) \emph{rational} if \(\gamma\in H_1(T; \mathbb{Z})\); in this case, we will require \(\gamma\) to be primitive by convention. Let \(\mathcal{S}(T)\) denote the set of slopes on \(T\), noting that \(\mathcal{S}(T) = P H_1(T; \RR) \cong S^1\). Since \(\pi_1(T) = H_1(T; \mathbb{Z}) = \ZZ^2\), a left-ordering on \(\pi_1(T)\) yields a slope corresponding to \([L(\ooo)]\). In fact, the \emph{slope map} \[s:\LO(\pi_1(T))\to \mathcal{S}(T),\; \ooo\mapsto [L(\ooo)]\] is continuous \cite[Chapter 6]{Clay2010}.

Now let \(K\) be a nontrivial knot in \(S^3\) with knot complement \(M\). The inclusion of the boundary torus \(\partial M\to M\) induces an embedding \(\pi_1(\partial M) \to \pi_1(M)\). Thus, a left-ordering \(\ooo\) on \(\pi_1(M)\) uniquely determines a slope \(s(\ooo|_{\pi_1(\partial M)})\) on the boundary torus \(\partial M\), where \(\ooo|_{\pi_1(\partial M)}\) is the restriction of the left-ordering \(\ooo\) to \(\pi_1(\partial M)\).

To simplify our discussion and notation, we often refer to slopes on \(M\) rather than \(\partial M\), and set \(\mathcal{S}(M) = \mathcal{S}(\partial M)\).  This gives us a well-defined slope map \[s: \LO(\pi_1(M))\to \mathcal{S}(M),\; \ooo\mapsto [L(\ooo|_{\pi_1(\partial M)})].\] Finally, a left-ordering \(\ooo \in \LO(\pi_1(M))\) is \emph{boundary-cofinal} if the peripheral subgroup \(\pi_1(\partial M)\) is \(\ooo\)-cofinal.

This manuscript largely concerns the notion of order-detection of slopes, which we are now ready to define.

\begin{definition}[\cite{BC23}]
    A slope \([\gamma]\in \mathcal{S}(M)\) is \emph{weakly order-detected} if there is a left-ordering \(\ooo\) on \(\pi_1(M)\) such that \(s(\ooo)=[\gamma]\). The slope \([\gamma]\) is \emph{order-detected} if there is a left-ordering \(\ooo\in \LO(\pi_1(M))\) such that \(s(g\cdot \ooo)=[\gamma]\) for all \(g\in \pi_1(M)\).
\end{definition}

For more information on order-detection of slopes, see \cite{BC23, CL25}.



\section{The conjugate-slope property}

\label{sec:con_slope}

Let \(G\) be a group. Given a subset \(S \subset G\) we use the notation \(N(S)\) to denote the normal subsemigroup of \(G\) generated by \(S\), and therefore
\[ N(S) = \left\{ \prod_{i=1}^m h_i g_i h_i^{-1} \mid h_i \in G, g_i \in S, m \in \mathbb{Z}^+ \right\}.
\]
We use \(R(S)\) to denote the set of positive roots of elements of \(S\), that is,
\[ R(S) = \{ g \in G \mid g^n \in S \mbox{ for some positive integer } n\}.
\]

Fix a nontrivial knot \(K\subset S^3\) and a choice of meridian and longitude \(\mu, \lambda \in G\), where \(G\) is the knot group of \(K\).

\begin{definition}
    \label{conjugate-slope}
    Let  \(n\) be a positive integer.  We say that a nontrivial knot \textit{\(K\) has the conjugate-slope property at \(n\)}, or simply \textit{\(K\) has the conjugate-slope property}, if \(N(\mu^n \lambda) \cap N(\mu) \neq \emptyset\).
\end{definition}

\begin{proposition}
    If \(K\) has the conjugate-slope property at \(n\), then \(K\) has the conjugate-slope property at every \(m>n\).
\end{proposition}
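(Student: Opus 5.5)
It suffices to treat \(m = n+1\) and then induct on \(m\); the key observation is that \(\mu\) and \(\lambda\) commute, since both lie in the peripheral subgroup \(\pi_1(\partial M) \cong \ZZ^2\). This lets us write
\[
\mu^{n+1}\lambda = \mu \cdot \mu^n \lambda = \mu^n\lambda \cdot \mu .
\]
The plan is to take an element \(g \in N(\mu^n\lambda) \cap N(\mu)\) and use it to build an element of \(N(\mu^{n+1}\lambda) \cap N(\mu)\).

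First, I will check that \(N(\mu)\) is absorbing for products. Both \(N(\mu)\) and \(N(\mu^n\lambda)\) are normal subsemigroups, so \(N(\mu) \cdot N(\mu) \subset N(\mu)\). Because \(\mu^{n+1}\lambda = \mu \cdot (\mu^n\lambda)\), a candidate element is \(\mu g\). It lies in \(N(\mu)\), since both \(\mu\) and \(g\) do. However, it is not obviously in \(N(\mu^{n+1}\lambda)\), so I will work with the conjugates appearing in \(g\) directly.

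Second, write \(g = \prod_{i=1}^k h_i (\mu^n\lambda) h_i^{-1}\). Consider
\[
g' = \prod_{i=1}^k h_i (\mu^{n+1}\lambda) h_i^{-1} \in N(\mu^{n+1}\lambda).
\]
Each factor satisfies
\[
h_i \mu^{n+1}\lambda h_i^{-1} = (h_i \mu h_i^{-1})(h_i \mu^n\lambda h_i^{-1}).
\]
Hence \(g'\) is obtained from \(g\) by inserting conjugates of \(\mu\) between the factors. To show \(g' \in N(\mu)\), I would rewrite it as a product of conjugates of \(\mu\) together with the factors of \(g\). The identity \(x\,y = y\,(y^{-1}xy)\) moves each inserted conjugate of \(\mu\) to the right past the remaining factors at the cost of conjugating it. This yields
\[
g' = g \cdot \prod_{i} c_i ,
\]
where each \(c_i\) is a conjugate of \(\mu\). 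Since \(g \in N(\mu)\) and each \(c_i \in N(\mu)\), it follows that \(g' \in N(\mu)\). Therefore \(g' \in N(\mu^{n+1}\lambda) \cap N(\mu)\), which is nonempty.

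The only step needing care is this rearrangement: one must confirm that moving a conjugate of \(\mu\) past other elements keeps it a conjugate of \(\mu\), which the identity above guarantees. Everything else is formal, and induction on \(m\) completes the argument.
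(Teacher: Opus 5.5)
Your argument is correct and is essentially the paper's proof in mirror image. The paper rewrites each conjugate of \(\mu^n\lambda\) as a conjugate of \(\mu^{-1}\) times a conjugate of \(\mu^{n+1}\lambda\), then left-multiplies the given element of \(N(\mu)\) by conjugates of \(\mu\) to absorb the \(\mu^{-1}\) factors; you instead start from \(g'\), which lies in \(N(\mu^{n+1}\lambda)\) by construction, and push the extra conjugates of \(\mu\) to the right to get \(g' = g\cdot\prod c_i \in N(\mu)\), a version that makes the regrouping step fully explicit (and the commutativity of \(\mu\) and \(\lambda\) is never actually needed, since \(\mu^{n+1}\lambda = \mu\cdot\mu^n\lambda\) holds by associativity alone).
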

\begin{proof}
    Suppose that \(N(\mu^n \lambda) \cap N(\mu) \neq \emptyset\).  We show that \(K\) has the conjugate-slope property at \(n+1\), and the claim then follows by induction.

    We have
    \[ \prod_{j=1}^\ell h^{-1}_j \mu h_j= \prod_{i=1}^k g^{-1}_i\mu^n \lambda g_i
    \]
    and note that we can rewrite the right-hand side as
    \[ \prod_{i=1}^k (g_i^{-1}
        \mu^{-1} g_i)(g^{-1}_i\mu^{n+1} \lambda g_i).
    \]
    This expression becomes a product of conjugates of \(\mu^{n+1}\lambda\) when we left-multiply by \(\prod_{i=0}^{k-1}(g_{k-i}^{-1} \mu g_{k-i})\).  Our first equality becomes
    \[  \prod_{i=0}^{k-1}(g_{k-i}^{-1} \mu g_{k-i}) \prod_{j=1}^\ell h^{-1}_j \mu h_j = \prod_{i=0}^{k-1}(g_{k-i}^{-1} \mu g_{k-i})\prod_{i=1}^k (g_i^{-1}
        \mu^{-1} g_i)(g^{-1}_i\mu^{n+1} \lambda g_i).
    \]
    The left-hand side of this equality lies in \(N(\mu)\), while the right-hand side is an element of \(N(\mu^{n+1} \lambda)\).
\end{proof}
Consequently, when we begin investigating which knots have the conjugate-slope property, we will always aim to prove that the conjugate-slope property holds for the smallest possible positive integer \(n\).

To this end, let \(K\) be a nontrivial knot, and \(D\) an oriented diagram of \(K\).  Let \(A_1, \ldots, A_m\) denote the oriented arcs of the diagram, and denote the crossings by \(C_1, \ldots, C_n\).  Corresponding to each crossing is a collection of three arcs, two of them being the under-arcs of the crossing and one of them being the over-arc. Each crossing is either positive or negative according to the right-hand rule, and the crossings themselves are cyclically ordered according to the order that we \emph{pass under each over-arc} as we travel along the oriented arcs of the diagram.  Without loss of generality, we assume that the crossings are indexed so that \((C_1, \ldots, C_n)\) is, up to cyclic permutation, the natural cyclic ordering of the crossings.  We say that an arc \(A_i\) is \emph{between} crossings \(C_k\) and \(C_\ell\) if, after passing under the crossing \(C_k\), we travel along the arc \(A_i\) before passing under the crossing \(C_\ell\).

We say that the \emph{negative crossings of \(D\) occur in two batches} if there exist:
\begin{enumerate}
    \item Two arcs \(A_i, A_j \in \{A_1, \ldots, A_m\}\) such that for every negative crossing in the diagram, either \(A_i\) or \(A_j\) is the over-arc of the crossing.
    \item Integers \(r, s \in \{ 1, \ldots, n\}\) such that every negative crossing with over-arc \(A_i\) lies in \(\{ C_r, C_{r+1}, \ldots, C_{s-1}\}\) and every negative crossing with over-arc \(A_j\) lies in \(\{C_s, C_{s+1}, \ldots, C_{r-1}\}\), where we interpret the subscripts of the crossings in each set modulo \(n\).
    \item The arc \(A_i\) is between two crossings in \(\{ C_{r-1}, C_r, \ldots, C_{s}\}\) and the arc \(A_j\) is between two crossings in \(\{C_{s-1}, C_s, \ldots, C_{r}\}\).
\end{enumerate}
If there are \(k\) negative crossings with over-arc \(A_i\) and \(\ell\) negative crossings with over-arc \(A_j\), we say that \(k\) and \(\ell\) are the \emph{size} of each batch of negative crossings.  Either batch may be empty; in particular, positive knots satisfy the definition vacuously.

Nontrivial examples of knots having two batches of negative crossings can be constructed using (for example) braid closures, but in particular we would like to highlight the following.

\begin{proposition}
    \label{prop:knotsum}
    Suppose that \(K_1\) and \(K_2\) are nontrivial knots that each admit a diagram whose negative crossings have a common over-arc.   Then the knot sum \(K_1 \# K_2\) admits a diagram whose negative crossings occur in two batches.
\end{proposition}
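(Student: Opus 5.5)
We build the diagram of \(K_1 \# K_2\) by choosing carefully where to cut each summand, so that the cyclic order of crossings in the sum is the order of \(D_1\), cut at one point, followed by the order of \(D_2\), cut at one point. Let \(D_1\) and \(D_2\) be oriented diagrams of \(K_1\) and \(K_2\) whose negative crossings have common over-arcs \(B_1\) and \(B_2\), respectively. The connected sum is formed by cutting \(D_1\) at a point \(p_1\) in the interior of some arc, cutting \(D_2\) at a point \(p_2\), and joining the loose ends by two parallel strands in the plane. This is done compatibly with orientations and creates no new crossings.

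First we place the cut points. Cut \(D_1\) at a point \(p_1\) lying on an arc \(E_1 \neq B_1\), chosen so that \(p_1\) lies outside the over-arc \(B_1\). If \(D_1\) has only one arc, the diagram is trivial up to isotopy, so \(K_1\) would be trivial; more generally we can replace \(D_1\) by a diagram with at least two arcs, for instance by a Reidemeister I move that adds a positive kink. Do the same for \(D_2\), obtaining \(p_2 \notin B_2\). Since neither cut point lies on \(B_1\) or \(B_2\), both over-arcs survive intact as arcs \(A_i\) and \(A_j\) of the sum diagram \(D\). Every crossing of \(D\) is a crossing of exactly one \(D_k\) with the same sign, so condition (1) holds with \(A_i = B_1\) and \(A_j = B_2\). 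Each arc \(E_k\) is split at \(p_k\), and the two ends are fused across the band with pieces of the other diagram's arc: the end of \(E_1\) after \(p_1\) joins the beginning of \(E_2\) after \(p_2\), and vice versa.

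Next we check the cyclic order. Traversing \(D\) from just after the band crossing from \(D_2\) into \(D_1\), we pass under the crossings of \(D_1\) in their cyclic order starting from \(p_1\), say \(C_1,\dots,C_a\). We then cross the band and pass under the crossings of \(D_2\) in their order starting from \(p_2\), namely \(C_{a+1},\dots,C_n\). Take \(r\) to be the index of the first crossing of \(D_1\) after \(p_1\), so \(r = 1\), and take \(s = a+1\). Then every negative crossing with over-arc \(A_i\) lies in \(\{C_1,\dots,C_a\} = \{C_r,\dots,C_{s-1}\}\), and every negative crossing with over-arc \(A_j\) lies in \(\{C_{a+1},\dots,C_n\} = \{C_s,\dots,C_{r-1}\}\). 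This is condition (2).

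Condition (3) is where the choice of cut point matters most, and I expect it to need the most care. The arc \(A_i = B_1\) lies in \(D_1\) and avoids \(p_1\). Its endpoints are therefore undercrossing points of two crossings of \(D_1\), and \(A_i\) is traversed between two consecutive undercrossings, both among \(C_1,\dots,C_a\). We must check that "between" is meant in the sense of consecutive crossings in the cyclic order. The only wrap-around case is an arc of \(D_1\) running from the last \(D_1\)-undercrossing through \(p_1\) to the first one. That arc is \(E_1\), split and fused into the band arcs, and we chose \(B_1 \neq E_1\), so it is excluded. Hence \(A_i\) lies between two crossings of \(\{C_0, C_1, \dots, C_{a+1}\} = \{C_{r-1},\dots,C_s\}\). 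The argument for \(A_j\) within \(\{C_{s-1},\dots,C_r\}\) is symmetric.

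A degenerate case remains: a summand with no crossings at all under the chosen diagram, or with no negative crossings. A summand with no negative crossings makes its batch empty, so the conditions hold trivially. A summand with no crossings cannot occur, since the \(K_k\) are nontrivial.
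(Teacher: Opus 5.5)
Your proof is correct and follows essentially the same route as the paper: you cut each summand away from its negative over-arc, so the crossings of the sum appear in the cyclic order of those of \(D_1\) followed by those of \(D_2\), and you take \(r\) and \(s\) at the two junctions. Your explicit check of condition (3), which rules out the wrap-around arc through the cut point, spells out what the paper leaves implicit when it says that \(A_1\) is an arc from the diagram for \(K_1\).
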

\begin{proof}
    Suppose that \(K_1\) admits an oriented diagram with crossings \(C_1, \ldots, C_j\) (appearing in that order), and that \(K_2\) admits an oriented diagram with crossings \(C'_1, \ldots, C'_k\) (again, appearing in the given order).  Fix an arc \(A_i\) in the diagram of \(K_i\) for \(i=1,2\) such that all negative crossings of the diagram share \(A_i\) as a common over-arc.

    The connected sum \(K_1 \# K_2\) is the nontrivial knot constructed by deleting a short segment from an arc in \(K_1\) and a short segment from an arc in \(K_2\), and connecting the resulting endpoints.  This construction is independent of the choice of deleted segments, and so in particular one may choose segments in such a way that the arcs \(A_1\) and \(A_2\) are untouched.
    Then \(K_1 \# K_2\)  admits an oriented diagram with crossings \(C_1, \ldots, C_j, C'_1, \ldots, C'_k\), with their cyclic order as written. Then every negative crossing with over-arc \(A_1\) lies in \(\{C_1, \ldots, C_j\}\) and every negative crossing with over-arc \(A_2\) lies in \(\{C'_1, \ldots, C'_k\}\).  Moreover, \(A_1\) is between two crossings in \(\{C'_k, C_1, \ldots, C_j, C'_1\}\) since \(A_1\) is an arc from the diagram for \(K_1\), and \(A_2\) is between two crossings in \(\{C_j, C'_1, \ldots, C'_k, C_1\}\) since \(A_2\) is an arc from the diagram for \(K_2\).
\end{proof}

\begin{theorem}
    \label{thm:con_slope_property}
    Let \(K\) be a nontrivial knot in \(S^3\) with knot group \(G\).  Suppose that \(K\) admits an oriented diagram with \(v\) positive crossings and whose negative crossings occur in two batches of size \(k\) and \(\ell\).  Set \(N=v+|k-\ell|\).  Then there exists a choice of meridian and longitude \(\mu, \lambda\) and elements \(g, h_1, \ldots, h_{2N} \in G\) such that
    \[ \mu^N \lambda g^{-1}(\mu^N \lambda)g = \prod_{i=1}^{2N} h_i^{-1}\mu h_i.
    \]
    In particular, if \(K\) is a knot whose negative crossings occur in two batches of size \(k\) and \(\ell\), then \(K\) has the conjugate-slope property at \(N\).
\end{theorem}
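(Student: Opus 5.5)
We will work with the Wirtinger presentation associated with the diagram \(D\). Let \(x_1,\dots,x_m\) be the Wirtinger generators corresponding to the arcs \(A_1,\dots,A_m\). Each is a meridian, and all are conjugate. At a crossing \(C_t\), let the incoming under-arc be \(A_a\), the outgoing under-arc \(A_b\), and the over-arc \(A_c\). The relation is \(x_b = x_c^{\epsilon_t} x_a x_c^{-\epsilon_t}\), where \(\epsilon_t=\pm 1\) is the sign of the crossing (up to a fixed convention).

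A longitude can be read off by travelling along the knot from a chosen basepoint arc. Each time we pass under \(C_t\) we record \(x_{c(t)}^{\epsilon_t}\), and at the end we multiply by \(\mu^{-w}\), where \(w=v-(k+\ell)\) is the writhe. This correction makes the longitude null-homologous, which gives the Seifert framing. So, up to conjugation, \(\lambda = \mu^{-w}\prod_{t} x_{c(t)}^{\epsilon_t}\), where \(\mu\) is the generator of the starting arc and the product follows the cyclic order of crossings.

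The key step is to choose the basepoint, and hence \(\mu\) and \(\lambda\), using condition (3) of the definition of two batches. Start the traversal at the arc \(A_i\), which lies between crossings in \(\{C_{r-1},\dots,C_s\}\). Then the word splits into two blocks. The first block is the crossings \(C_r,\dots,C_{s-1}\), whose negative letters are all \(x_i^{-1}\). The second block is \(C_s,\dots,C_{r-1}\), whose negative letters are all \(x_j^{-1}\).

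Within the first block, every arc traversed lies near \(A_i\) in the sense of condition (3). The plan is to move each \(x_i^{-1}\) to the front. Commuting it past positive letters changes those letters only by conjugation, so they remain conjugates of \(\mu\). Since \(x_i\) is the meridian \(\mu\) at the basepoint, this produces \(\lambda = \mu^{-w}\mu^{-k}(\text{product of } v_1 \text{ conjugates of } \mu)\cdot(\text{block two})\). Using a conjugate \(g\) that moves the basepoint to \(A_j\), the second block similarly yields \(\mu'^{-\ell}\) times positive conjugates, where \(\mu' = g^{-1}\mu g\). Here \(\mu\) commutes with \(\lambda\), and \(\mu'\) commutes with \(g^{-1}\lambda g\).

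We then form the product \(\mu^N\lambda\cdot g^{-1}\mu^N\lambda g\) with \(N=v+|k-\ell|\). The exponent is chosen so that the negative powers \(\mu^{-w-k}\) and \(\mu'^{-w-\ell}\) are cancelled, once the powers of \(\mu\) are absorbed into the longitudes they commute with. What should remain is exactly \(2N\) conjugates of \(\mu\), and the count is \(v\) positive letters plus leftover powers. Checking the arithmetic: \(N-w-k = v+|k-\ell|-v+\ell = |k-\ell|+\ell-k \ge 0\), and symmetrically for \(\ell\). Taking the product of the two copies lets the \(k\) and \(\ell\) deficits be balanced against each other.

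The final assertion then follows immediately. The left side is a product of two conjugates of \(\mu^N\lambda\), so it lies in \(N(\mu^N\lambda)\), and the right side lies in \(N(\mu)\).

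The main obstacle is the rewriting step that pulls the negative letters \(x_i^{-1}\) to the front while keeping \(\mu = x_i\) exactly, rather than merely up to conjugation. This requires the negative crossings of each batch to be encountered before the traversal leaves the region controlled by condition (3). We would first check carefully that each intermediate partial product is conjugation by a word that fixes \(x_i\). If direct commutation fails, the fallback is to insert \(x_i x_i^{-1}\) pairs, which adds conjugates of \(\mu\). The remaining bookkeeping is confirming that exactly \(2N\) conjugates result, which we verify by abelianising.
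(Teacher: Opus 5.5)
Your plan is the paper's plan. You take \(\mu=x_i\), read the longitude from \(A_i\) and again from \(A_j\) (the second reading being \(g^{-1}\lambda g\) with \(g^{-1}\mu g=x_j\)), and absorb negative letters using that \(\mu\) commutes with \(\lambda\) and \(x_j\) with \(g^{-1}\lambda g\). You then multiply the two readings and count by abelianising. However, you never actually carry out the cancellation that constitutes the proof, and the obstacle you single out is not the real one.

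First, moving \(x_i^{-1}\) leftward past a positive word, \(Px_i^{-1}=x_i^{-1}(x_iPx_i^{-1})\), never changes the letter \(x_i^{-1}\). So no \(x_ix_i^{-1}\) insertions are needed.

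The real issue is different. Condition (3) only places \(A_i\) somewhere inside the batch-one region, so the word read from \(A_i\) is not ``block one then block two''. It is
\(P_0\mu^{-1}\cdots\mu^{-1}P_t\,x_j^{-1}Q_1\cdots x_j^{-1}Q_\ell\,\mu^{-1}P_{t+1}\cdots\mu^{-1}P_k\), times \(\mu^{-w}\). What (3) buys is only that the \(x_j^{-1}\)'s form one contiguous block.

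The \(k-t\) copies of \(\mu^{-1}\) occurring \emph{after} that block must not be pulled to the front. Doing so conjugates the whole \(x_j^{-1}\) block by \(\mu^{k-t}\), and it then no longer collects to \(x_j^{-\ell}\). Instead, strip them off the right end using \([\mu,\lambda]=1\): multiply by \(\mu^{t}\) on the left and \(\mu^{k-t}\) on the right. This gives \(\lambda\mu^{v-\ell}x_j^{\ell}\in N(\mu)\). The same argument started from \(A_j\) gives \(g^{-1}\lambda\mu^{v-k}g\,\mu^{k}\in N(\mu)\).

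Second, the negative powers that need cancelling are not \(\mu^{-w-k}\) and \(\mu'^{-w-\ell}\). Each of those is absorbed by \(\mu^N\), resp.\ \(\mu'^N\), inside its own factor, since \(N-w-k=|k-\ell|+\ell\) and \(N-w-\ell=|k-\ell|+k\) are non-negative. What must be cancelled is the \emph{foreign} batch in each reading: \(x_j^{-\ell}\) in the \(A_i\)-reading and \(\mu^{-k}\) in the \(A_j\)-reading.

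To do this, multiply the two memberships above. The factor \(x_j^{\ell}=g^{-1}\mu^{\ell}g\) merges into the second longitude, because \(x_j\) commutes with \(g^{-1}\lambda g\). After conjugating by \(\mu^{k}\), the trailing \(\mu^{k}\) merges into the first longitude. This gives \(\lambda\mu^{v+k-\ell}\,g^{-1}\lambda\mu^{v+\ell-k}g\in N(\mu)\). Finally, pad the smaller exponent up to \(N\), using \(g^{-1}\mu^{2(k-\ell)}g\) on the right or \(\mu^{2(\ell-k)}\) on the left.

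The inequalities this uses are \(N-w-2k=|k-\ell|+\ell-k\ge0\) and \(N-w-2\ell=|k-\ell|+k-\ell\ge0\). Your displayed line is headed \(N-w-k\) but ends with the value of \(N-w-2k\). Your remark that the \(k\) and \(\ell\) deficits are ``balanced against each other'' points at exactly this mechanism. It has to be written out, though, because it is the entire content of the theorem.
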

\begin{proof}
    Fix a diagram \(D\) of \(K\) with two batches of crossings as in the statement of the theorem.  Recall that for each arc \(A_i\) in the diagram, there is a corresponding Wirtinger generator \(x_i\).  Suppose that the arcs \(A_i\) and \(A_j\) in the diagram \(D\) have \(k\) and \(\ell\) negative under-crossings as in the definition of two negative batches.  We choose the Wirtinger generator \(x_i\) as our meridian \(\mu\).

    We can write down an expression for the longitude \(\lambda\) of \(K\) as follows \cite[Remark 3.14]{BurdeGerhard2013K}.  Begin at a point on the arc \(A_i\).  Proceed along the arcs of the diagram following the orientation of \(D\), and write the Wirtinger generator \(x_k^{\epsilon_k}\) when crossing under the arc \(A_k\), with \(\epsilon_k \in \{ \pm 1\}\) recording the sign of the crossing.  Upon returning to our starting point on \(A_i\), we write \(\mu^{-\omega(D)}\) where \(\omega(D)\) is the writhe of the diagram.

    Using the fact that our negative crossings come in two batches, we arrive at
    \[ \lambda = P_0 \mu^{-1} P_1 \mu^{-1} \cdots \mu^{-1} P_tx_j^{-1} Q_1 x_j^{-1} Q_2 \cdots x_j^{-1}Q_{\ell} \mu^{-1} P_{t+1} \mu^{-1} \cdots \mu^{-1}P_k \mu^{-\omega(D)}
    \]
    where \(P_i, Q_j\) are products of Wirtinger generators with positive exponents, and \(0 \leq t \leq k\).  Note that \(\omega(D) = v-k-\ell\), and so we have
    \[\lambda \mu^{v-k-\ell}= P_0 \mu^{-1} P_1 \mu^{-1} \cdots \mu^{-1} P_t x_j^{-1} Q_1 x_j^{-1} Q_2 \cdots x_j^{-1}Q_{\ell} \mu^{-1} P_{t+1} \mu^{-1} \cdots \mu^{-1}P_k.
    \]
    When \(t=0\) (resp. \(t=k\)) we interpret the prefix (resp. suffix) of the product to consist only of \(P_0\) with no occurrences of \(\mu^{-1}\) (resp. consist only of \(P_k\) with no occurrences of \(\mu^{-1}\)).  Note that if there are no negative crossings (meaning \(k=\ell = 0\)), then we can already reach the desired conclusion since the right-hand side consists of a product of Wirtinger generators, all of which are conjugate to \(\mu\). Otherwise, we proceed as follows.

    Multiplying on the left and right by \(\mu^{t}\) and \(\mu^{k-t}\), we arrive at
    \[ \lambda \mu^{v-\ell} = \mu^tP_0 \mu^{-1} P_1 \mu^{-1} \cdots \mu^{-1} P_tx_j^{-1} Q_1 x_j^{-1} Q_2 \cdots x_j^{-1}Q_{\ell} \mu^{-1} P_{t+1} \mu^{-1} \cdots \mu^{-1}P_k \mu^{k-t}.
    \]
    Note that every Wirtinger generator is conjugate to \(\mu\), and therefore
    \[
        P = \mu^tP_0 \mu^{-1} P_1 \mu^{-1} \cdots \mu^{-1} P_t  \mbox{ and } P' = \mu^{-1} P_{t+1} \mu^{-1} \cdots \mu^{-1}P_k \mu^{k-t}
    \]
    can both be expressed as products of conjugates of \(\mu\), since each \(P_i\) is already a product of conjugates of \(\mu\).  Therefore, rewriting the equation above we have
    \[ \lambda \mu^{v-\ell} = Px_j^{-1} Q_1 x_j^{-1} Q_2 \cdots x_j^{-1}Q_{\ell} P'
    \]
    which upon right-multiplying by \(x_j^\ell\) becomes
    \[
        \lambda \mu^{v-\ell}x_j^\ell = Px_j^{-1} Q_1 x_j^{-1} Q_2 \cdots x_j^{-1}Q_{\ell} P' x_j^\ell,
    \]
    where the right-hand side can clearly be expressed as a product of conjugates of \(\mu\), since each \(Q_j\) is already such, and so \(\lambda \mu^{v-\ell}x_j^\ell \in N(\mu)\).  In particular, note that if \(\ell = 0\) then there are no occurrences of \(x_j^{-1}\) on the right-hand side, and the proof can be concluded easily from here.  Otherwise, we must use the second batch of negative crossings as follows.

    We repeat the same procedure as above, but we begin reading the formula for the longitude by starting on the arc \(A_j\).  Let \(g \in G\) denote the element satisfying \(g^{-1} \mu g = x_j\).  Recording the Wirtinger generators at each under-crossing as before, but starting on the arc \(A_j\), we have
    \[ g^{-1} \lambda g  = R_0 x_j^{-1} \cdots x_j^{-1} R_s \mu^{-1} S_1 \cdots \mu^{-1}S_k x_j^{-1} R_{s+1} \cdots x_j^{-1}R_\ell x_j^{-\omega(D)},
    \]
    where \(R_i\), \(S_j\) are products of Wirtinger generators, and we interpret the cases \(s=0\) and \(s=\ell\) similar to the previous case. As before, we may left and right multiply by appropriate powers of \(x_j\) and \(\mu\) in order to write \(g^{-1} \lambda \mu^{v-k} g \mu^k\) as a product of conjugates of \(\mu\) and conclude \(g^{-1} \lambda \mu^{v-k} g \mu^k \in N(\mu)\).

    Overall, we may therefore conclude that
    \[ \lambda \mu^{v-\ell}x_j^\ell \cdot g^{-1} \lambda \mu^{v-k} g \mu^k  \in N(\mu).
    \]
    Using \(g^{-1} \mu g = x_j\) and conjugating by \(\mu^k\) we conclude \[\lambda \mu^{v-\ell+k} g^{-1} \lambda \mu^{v-k+\ell} g \in N(\mu).\]

    If \(k>\ell\) then we right multiply by \(g^{-1}\mu^{2(k-\ell)}g\) and conclude
    \(\lambda \mu^{v-\ell+k} g^{-1} \lambda \mu^{v-k+\ell} g \cdot g^{-1}\mu^{2(k-\ell)}g = \lambda \mu^{v+|k-\ell|} g^{-1} \lambda \mu^{v+|k-\ell|} g \in N(\mu).\)
    If \(\ell >k\), we left-multiply by \(\mu^{2(\ell-k)}\) and similarly arrive at \[\lambda \mu^{v+|k-\ell|} g^{-1} \lambda \mu^{v+|k-\ell|} g \in N(\mu),\] whereas if \(k=\ell\) we can reach the same conclusion without any additional multiplication.

    In any event,  we have \[\lambda \mu^{v+|k-\ell|} g^{-1} \lambda \mu^{v+|k-\ell|} g \in N(\mu)\] which yields
    \[\mu^{v+|k-\ell|} \lambda g^{-1}(\mu^{v+|k-\ell|} \lambda)g = \prod_{i=1}^{2(v+|k-\ell|)} h_i^{-1}\mu h_i
    \]
    for appropriate choices of \(h_i\).
\end{proof}

\begin{example}
    In \cite{Himeno25}, the author produces a family \(K_n\) of hyperbolic L-space knots, where \(n \geq 2\). They are not braid positive when \(n\) is an even positive integer.

    The knot \(K_n\) has genus
    \[ g(K_n) = \frac{3}{2}n^2 -\frac{1}{2}n +1,
    \]
    and admits a diagram having \(3n^2 + 2n\) positive crossings, and \(n-1\) negative crossings (see \cite[Lemma 2.11]{Himeno25} and \cite[Figure 2]{Himeno25}).  Moreover, the \(n-1\) crossings all share a common over-arc.  As such, Theorem \ref{thm:con_slope_property} applies, and we conclude that the knots \(K_n\) have the conjugate-slope property at \(3n^2 + 3n = 2g(K_n) +(4n-1)\).
\end{example}

\begin{corollary}
    If \(K\) is a nontrivial knot that admits a diagram with two or fewer negative crossings, then \(K\) has the conjugate-slope property.
\end{corollary}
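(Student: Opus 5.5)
The corollary should follow from Theorem \ref{thm:con_slope_property} once we check that any diagram with at most two negative crossings has its negative crossings occurring in two batches. Fix such a diagram \(D\) of \(K\) with \(v\) positive crossings. There are three cases, by the number of negative crossings.

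\emph{No negative crossings.} Both batches are empty and the definition holds vacuously.

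\emph{One negative crossing.} Let \(A_i\) be its over-arc, and let the crossing be \(C_p\). We need integers \(r,s\) such that \(C_p \in \{C_r, \ldots, C_{s-1}\}\) and such that \(A_i\) lies between two crossings of \(\{C_{r-1}, \ldots, C_s\}\). The natural choice is to take the arc set \(\{C_r,\dots,C_{s-1}\}\) to be almost all of the cyclic list \((C_1,\dots,C_n)\). We then take \(A_j\) to be any arc with an empty batch, and choose \(s\) so that condition (3) for \(A_j\) is also met. Concretely, if \(A_i\) lies between \(C_a\) and \(C_{a+1}\), we may take \(r = a+1\) and \(s = a\). Then the first window \(\{C_{a+1},\dots,C_{a-1}\}\) contains every crossing except \(C_a\). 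The window \(\{C_a, \dots, C_{a}\}\) for condition (3) contains \(C_a\) and \(C_{a+1}\), hence contains the endpoints of \(A_i\).

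Some care is needed here. If \(C_p = C_a\), we instead use \(r=a\) and \(s=a-1\) and recheck condition (3). Alternatively, we may simply place the single crossing in the \(A_j\)-batch with \(A_j = A_i\), using the symmetry of the definition. If the index bookkeeping becomes awkward, a cleaner option is to treat a single batch as a special case of the argument in the proof of Theorem \ref{thm:con_slope_property}. That argument with \(\ell=0\) already gives \(\lambda\mu^{v-0}\mu^{k}\)-type expressions in \(N(\mu)\) directly, without needing the second batch.

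\emph{Two negative crossings.} Let them be \(C_p\) and \(C_q\), with over-arcs \(A_i\) and \(A_j\) respectively; possibly \(A_i=A_j\). If \(A_i=A_j\), we are back in the single-batch situation just handled. Otherwise each batch has size one. The cyclic ordering splits into two complementary intervals, one containing \(C_p\) and the other \(C_q\). We choose \(r\) and \(s\) so that the cut points sit adjacent to the endpoints of \(A_i\) and \(A_j\), which makes condition (3) hold.

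The main obstacle is this verification of condition (3) for the two-crossing case. We must choose the cut points \(r\) and \(s\) compatibly with the positions of the arcs \(A_i\) and \(A_j\) in the cyclic order. If no such choice exists, the fallback is to read the longitude from \(A_i\) and from \(A_j\) as in the proof of the theorem. With only one \(\mu^{-1}\) and one \(x_j^{-1}\) appearing in each word, the rearrangement into \(N(\mu)\) can be done by hand without the batch hypothesis. Once the hypotheses are in place, Theorem \ref{thm:con_slope_property} gives the conjugate-slope property at \(N=v+|k-\ell|\).
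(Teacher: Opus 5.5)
The step that fails is your claim, in the case of two negative crossings $C_p,C_q$ with distinct over-arcs $A\neq B$, that $r,s$ can always be placed so that condition (3) holds. Your reduction to Theorem~\ref{thm:con_slope_property} is the one the paper makes: it only remarks that at most two negative crossings ``trivially'' give two batches. Your other cases are fine: no negative crossings, or all negative crossings on one over-arc. In the one-over-arc case no positional bookkeeping is needed, since reading from that arc gives $\lambda\mu^{v}=\mu^{k}P_0\mu^{-1}P_1\cdots\mu^{-1}P_k\in N(\mu)$.

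Read as the proof of the theorem needs them, (2) and (3) say the following. The arcs ending at $C_r$ and at $C_s$ cut the knot into two paths, which share these two arcs. The first passes under exactly $C_r,\dots,C_{s-1}$ and contains $C_p$ and $A$; the second passes under $C_s,\dots,C_{r-1}$ and contains $C_q$ and $B$.

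Suppose that, travelling from $A$, you pass under $C_q$, then under $C_p$, then along $B$.
\begin{itemize}
\item The first path contains $A$ and $C_p$ but not $C_q$. The stretch from $A$ forward to $C_p$ passes under $C_q$, so the first path contains the stretch from $C_p$ forward to $A$, which runs along $B$. Hence $B$ is a cut arc.
\item Symmetrically, the second path contains the stretch from $B$ forward to $C_q$, which runs along $A$. Hence $A$ is a cut arc.
\item The two paths are then the stretch from $A$ to $B$, which passes under both $C_q$ and $C_p$, and the stretch from $B$ to $A$, which passes under neither. This is a contradiction.
\end{itemize}
The order $A,B,C_p,C_q$ fails the same way. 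These two orders are precisely the configurations where the chords of $C_p$ and $C_q$ interlace in the Gauss diagram.

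This really occurs. The closure of $\sigma_1^{-2}\sigma_2^{3}\sigma_1^{3}$ is a diagram of the trefoil whose only negative crossings form a parallel clasp. The two chords interlace, the over-arcs are distinct, and one checks that no $r,s$ exist.

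So your fallback is not a contingency: it is the proof, and it is also what the paper's one-line justification implicitly relies on. Write it out. The batch hypothesis is used in the proof of the theorem only to ensure that the word read from $A_i$ contains the letters $x_j^{-1}$ as one block, and symmetrically from $A_j$; with $k=\ell=1$ this is automatic. With $\mu=x_i$ you have $\lambda\mu^{v-2}=W$, where $W$ consists of $v\geq 1$ positive Wirtinger letters together with one $\mu^{-1}$ and one $x_j^{-1}$. Depending on which of these two letters comes first,
\[
\lambda\mu^{v-1}x_j=(\mu P_0\mu^{-1})\,P_1\,(x_j^{-1}P_2x_j)\quad\text{or}\quad\lambda\mu^{v-1}x_j=P_0\,\bigl(x_j^{-1}P_1\mu^{-1}P_2\mu\,x_j\bigr),
\]
and this lies in $N(\mu)$ either way. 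Reading from $A_j$ in the same way gives $g^{-1}\lambda\mu^{v-1}g\,\mu\in N(\mu)$, with $g$ as in the proof of the theorem. Multiply these two elements and conjugate by $\mu$ to get $\lambda\mu^{v}\,g^{-1}(\lambda\mu^{v})g\in N(\mu)\cap N(\mu^{v}\lambda)$. So $K$ has the conjugate-slope property at $v$.
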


As noted in the course of the proof of Theorem \ref{thm:con_slope_property}, the proof simplifies substantially when at least one of the batches of negative crossings is empty.  Owing to this simpler situation, we can improve our computations to show that a positive knot diagram with \(C\) crossings has the conjugate-slope property for some \(N<C\).  Below we give examples of such improvements, and their applications.

Recall that an arc in a diagram \(D\) of a knot \(K\) is called a \emph{bridge} if it is the over-arc of at least one crossing; the number of such crossings is called the \emph{length} of the bridge. Given a diagram \(D\), define the \emph{bridge length} of \(D\) to be the length of the longest bridge.

\begin{proposition}
    \label{prop:one_bridge}
    Suppose that \(K\) is a nontrivial knot admitting a positive diagram \(D\) with \(c\) crossings and bridge length \(d\), and let \(G\) denote the knot group of \(K\).  Then there exists a choice of meridian and longitude \(\mu, \lambda\) and elements \(h_1, \ldots, h_{c-d} \in G\) such that  \[\mu^{c-d}\lambda  = \prod_{i=1}^{c-d}h_i^{-1}\mu h_i.\]
    In particular, \(K\) has the conjugate-slope property at \(c-d\).
\end{proposition}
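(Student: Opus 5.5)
We follow the proof of Theorem \ref{thm:con_slope_property}, now with no negative crossings and with the basepoint chosen so that the bridge cancels. Let \(A_b\) be a bridge of maximal length \(d\) in the positive diagram \(D\), and take its Wirtinger generator \(x_b\) as the meridian \(\mu\). Start reading the longitude at a point of \(A_b\) and use the recipe of \cite[Remark 3.14]{BurdeGerhard2013K}. Since \(D\) is positive, every crossing contributes a positive exponent, and the writhe is \(\omega(D)=c\). This gives
\[
\lambda = W \mu^{-c}, \qquad W = y_1 y_2 \cdots y_c ,
\]
where each \(y_r\) is the Wirtinger generator of the over-arc at the \(r\)-th under-crossing met while travelling along \(D\). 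Exactly \(d\) of the letters \(y_r\) equal \(\mu\), namely those at the crossings whose over-arc is \(A_b\). The remaining \(c-d\) letters are other Wirtinger generators, and each of them is conjugate to \(\mu\).

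The plan is to remove the \(d\) letters equal to \(\mu\) from \(W\) by absorbing them into conjugations of neighbouring letters. Since \(\mu\) commutes with \(\lambda\), we have \(\mu^{c}\lambda = W\). We then repeatedly apply the identity \(y\mu = \mu(\mu^{-1}y\mu)\) to push each occurrence of \(\mu\) in \(W\) to the far left. In the process, each letter \(y_r\) that a \(\mu\) passes is replaced by a conjugate of itself, which is still a conjugate of \(\mu\). The result is \(W = \mu^{d} \prod_{i=1}^{c-d} h_i^{-1}\mu h_i\). Left-multiplying \(\mu^{c}\lambda = W\) by \(\mu^{-d}\) gives
\[
\mu^{c-d}\lambda = \prod_{i=1}^{c-d} h_i^{-1}\mu h_i ,
\]
which is exactly the claimed identity. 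The right-hand side lies in \(N(\mu)\) and the left-hand side lies in \(N(\mu^{c-d}\lambda)\), so \(K\) has the conjugate-slope property at \(c-d\).

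The main thing to check is bookkeeping. First, the exponent of \(\mu\) at the end of the Burde--Zieschang longitude word must be exactly \(-\omega(D)\), with the conventions matching those used in the proof of Theorem \ref{thm:con_slope_property}. Second, the crossings over \(A_b\) must contribute the letter \(x_b=\mu\) itself rather than some other generator; this holds because the letter recorded at a crossing is the generator of its over-arc. Third, \(c-d\) must be a positive integer. This holds because \(d<c\): if every crossing had over-arc \(A_b\), then the Wirtinger relations would make all generators equal to \(\mu\), so \(G\) would be cyclic and \(K\) trivial, contradicting the hypothesis. With these checks in place, the argument is a direct specialisation of the first half of the proof of Theorem \ref{thm:con_slope_property}.
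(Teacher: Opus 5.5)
Your proposal is correct and is essentially the paper's proof. Reading the longitude from a point on a longest bridge gives \(\lambda = P_0\mu P_1\mu\cdots\mu P_d\mu^{-c}\), and pushing the \(d\) copies of \(\mu\) to the left is exactly the paper's rearrangement \(\mu^{c-d}\lambda=\mu^{-d}P_0\mu P_1\cdots\mu P_d\in N(\mu)\); your check that \(d<c\) for a nontrivial knot is a detail the paper leaves implicit.
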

\begin{proof}
    Fix an oriented positive diagram \(D\) of \(K\) with \(c\) crossings, and having a bridge of length \(d\) with associated over-arc \(A_i\) and Wirtinger generator \(x_i\).  Choosing \(x_i\) as our meridian \(\mu\), we may begin at a point on the arc \(A_i\) and follow the orientation of the diagram to read off:
    \[ \lambda = P_0 \mu P_1 \mu \cdots \mu P_d \mu^{-c}.
    \]
    Here, each \(P_i\) is a product of Wirtinger generators, and thus \(P_i \in N(\mu)\) whenever \(P_i\) is nontrivial.  Rearranging, we have
    \[ \mu^{c-d}\lambda = \mu^{-d}P_0 \mu P_1 \mu \cdots \mu P_d \in N(\mu),
    \]
    from which the Proposition follows.
\end{proof}

\begin{proposition}
    \label{prop:two_bridges}
    Suppose that \(K\) is a nontrivial knot admitting a positive diagram \(D\) with \(c\) crossings and two bridges of length \(d\), and let \(G\) denote the knot group of \(K\). Then there exists a choice of meridian and longitude \(\mu, \lambda\) and elements \(g, h_1, \ldots, h_{c-d} \in G\) such that  \[g^{-1}(\mu^{c-d-1}\lambda)g\mu^{c-d-1}\lambda  = \prod_{i=1}^{2(c-d-1)}h_i^{-1}\mu h_i.\]
    In particular, \(K\) has the conjugate-slope property at \(c-d-1\).
\end{proposition}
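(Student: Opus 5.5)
The plan is to run the argument of Proposition \ref{prop:one_bridge} twice, once from each of the two bridges, and then splice the two longitude words so that one extra meridian letter cancels on each side. Let \(A_i\) and \(A_j\) be the two bridges of length \(d\), with Wirtinger generators \(x_i\) and \(x_j\). We take \(\mu = x_i\) and fix \(g\in G\) with \(g^{-1}\mu g = x_j\).

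\emph{Step 1: the two longitude words.} Reading the longitude from a point on \(A_i\) as in the proof of Proposition \ref{prop:one_bridge} gives
\[\lambda\mu^{c}=W,\]
where \(W\) is a positive word of length \(c\) in the Wirtinger generators. Exactly \(d\) letters of \(W\) equal \(\mu\), and exactly \(d\) letters equal \(x_j\), since \(A_j\) is the over-arc of \(d\) crossings. Reading from \(A_j\) in the same way gives
\[g^{-1}\lambda g\, x_j^{c}=W',\]
with \(d\) letters \(x_j\) and \(d\) letters \(\mu\). Since \(\mu\) commutes with \(\lambda\), the target element is
\[g^{-1}(\mu^{c-d-1}\lambda)g\,\mu^{c-d-1}\lambda = W' x_j^{-(d+1)}\mu^{-(d+1)} W .\]
This is a word with \(2c\) positive Wirtinger letters and \(2(d+1)\) inverse letters \(x_j^{-1},\mu^{-1}\). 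The goal is to pair every inverse letter with a positive occurrence of the same generator, so that each pair is absorbed into a conjugate. Each such pair \(y^{-1}\cdots y\) leaves behind the intervening positive letters conjugated by powers of \(y\). This would leave exactly \(2c-2(d+1)=2(c-d-1)\) conjugates of \(\mu\), as claimed.

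\emph{Step 2: the easy cancellations.} As in Proposition \ref{prop:one_bridge}, \(d\) copies of \(\mu^{-1}\) are absorbed by the \(d\) letters \(\mu\) in \(W\), via
\[\mu^{-d}P_0\mu P_1\cdots\mu P_d=\prod_r \mu^{-e_r}P_r\mu^{e_r}.\]
Symmetrically, \(d\) copies of \(x_j^{-1}\) are absorbed by the \(d\) letters \(x_j\) in \(W'\), after moving \(x_j^{-d}\) leftward through \(W'\) by conjugation.

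\emph{Step 3: the extra pair (main obstacle).} The remaining letters \(x_j^{-1}\) and \(\mu^{-1}\) sit in the middle of the word. The leftover \(x_j^{-1}\) will cancel against one of the \(d\geq1\) occurrences of \(x_j\) inside \(W\). The leftover \(\mu^{-1}\) will cancel against one of the \(d\) occurrences of \(\mu\) inside \(W'\).

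The danger is that moving an inverse letter across the other block changes which generators appear and could destroy the positive/conjugate structure. To control this, I would first use the freedom in choosing where along \(A_i\) and \(A_j\) the readings start. I would start at the initial point of each arc, so that \(W\) begins immediately after the under-crossing that terminates the arc preceding \(A_i\), and likewise for \(W'\). Next I would choose the order of cancellation. The \(\mu^{-1}\) in the middle is pushed to the right into \(W\) together with \(\mu^{-d}\); its extra copy is compensated by borrowing one \(\mu\) from \(W'\). Concretely, I would rewrite
\[W'x_j^{-(d+1)}\mu^{-1}=(W'x_j^{-(d+1)}\mu^{-1}x_j^{d+1})\,x_j^{-(d+1)},\]
and then cancel \(x_j^{-(d+1)}\) into \(\mu^{-d}W\) using its \(x_j\)-letters. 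Here every inverse letter is conjugated by positive or negative powers of a single generator, so it remains a conjugate of \(\mu^{\pm1}\).

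If this bookkeeping fails for a fixed choice of starting points, I would instead cyclically rotate \(W\) (replacing \(\lambda\) by a conjugate, which is harmless since only membership in \(N(\mu)\) up to conjugating the whole element matters). This places an \(x_j\) letter first in \(W\) and a \(\mu\) letter last in \(W'\), after which the cancellation is immediate. The count of \(2(c-d-1)\) conjugates then follows, giving the displayed identity. In particular the element lies in \(N(\mu^{c-d-1}\lambda)\cap N(\mu)\), which is the conjugate-slope property at \(c-d-1\).
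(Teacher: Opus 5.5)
Your setup and Step 2 are correct and agree with the paper. The target equals \(W'x_j^{-(d+1)}\mu^{-(d+1)}W\), and both \(\mu^{-d}W\) and \(x_j^{-d}W'\) lie in \(N(\mu)\). The gap is Step 3, which is the actual content of the proposition.

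\textbf{Your rewriting absorbs nothing.} Substituting it and expanding, the target becomes
\[(W'x_j^{-d})\,(x_j^{-1}\mu^{-1}x_j)\,(x_j^{-1}\mu^{-d}W).\]
A conjugate of \(\mu^{-1}\) survives, and that is exactly what cannot appear in a product of conjugates of \(\mu\). Also, ``cancel \(x_j^{-(d+1)}\) into \(\mu^{-d}W\)'' would need \(d+1\) letters \(x_j\) in \(W\), but \(W\) has only \(d\).

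\textbf{The fallback fails too.} The word read from \(A_i\) does not depend on where on \(A_i\) you start, since letters are recorded only at under-crossings. Rotating \(W\) by a prefix \(u\) gives \(u^{-1}\lambda u\,(u^{-1}\mu u)^c\), so the identity \(\lambda\mu^c=W\) you rely on is lost. Rotating one factor of a product is also not a conjugation of the whole element. Even if \(W\) began with \(x_j\), the letters \(x_j^{-1}\mu^{-(d+1)}x_j\) form a conjugate of \(\mu^{-(d+1)}\), not a cancellation.

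The underlying obstruction is the order \(W'\,x_j^{-1}\mu^{-1}\,W\). The extra \(x_j^{-1}\) must pair with a letter to its right, while the extra \(\mu^{-1}\), to its right, must pair with a letter to its left. The pairings cross, and crossing pairings need not give products of conjugates: \(\mu x_j^{-1}\mu^{-1}x_j\) has exponent sum \(0\), so it is not in \(N(\mu)\).

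\textbf{The missing idea} is to use the peripheral commutation relations again and conjugate the whole element. Since \(\mu\) commutes with \(W=\lambda\mu^c\) and \(x_j\) commutes with \(W'\),
\[\mu^{-1}\bigl(W'x_j^{-(d+1)}\mu^{-(d+1)}W\bigr)\mu=\mu^{-1}W'x_j^{-(d+1)}\mu^{-d}W=(\mu^{-1}x_j^{-d}W')(x_j^{-1}\mu^{-d}W).\]
This moves the extra \(\mu^{-1}\) to the front of \(W'\), where no pairing crosses.

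Each factor lies in \(N(\mu)\). Write \(W=P_0\mu P_1\cdots\mu P_d\) with some \(P_i=Rx_jR'\), which exists because \(d\geq 1\). Using commutation, split \(\mu^{-d}W=\mu^{-i}W\mu^{-(d-i)}=X\,Rx_jR'\,Y\) with \(X,Y\in N(\mu)\cup\{id\}\). Then \(x_j^{-1}\mu^{-d}W=(x_j^{-1}XRx_j)R'Y\), a product of \(c-d-1\) conjugates of \(\mu\). The factor \(\mu^{-1}x_j^{-d}W'\) is handled symmetrically, using a \(\mu\)-letter in \(W'\).

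Since \(N(\mu)\) is closed under conjugation, the target is a product of \(2(c-d-1)\) conjugates of \(\mu\). This is the paper's proof: it shows \(x_j^{-1}\mu^{c-d}\lambda\in N(\mu)\) and \(\mu^{-1}g^{-1}\mu^{c-d}\lambda g\in N(\mu)\), multiplies them, and conjugates by \(\mu^{-1}\).
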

\begin{proof}
    Fix an oriented positive diagram \(D\) of \(K\) with \(c\) crossings, and having two bridges of length \(d\),  with associated over-arcs \(A_i\) and \(A_j\), with corresponding Wirtinger generators \(x_i\) and \(x_j\) respectively.  We will fix \(x_i\) as our choice of meridian \(\mu\).

    Beginning at a point on the arc \(A_i\), we read off \(\lambda = P_0 \mu P_1 \mu \cdots \mu P_d \mu^{-c}\), as in the proof of Proposition \ref{prop:one_bridge}.  Choose \(P_i\) such that \(P_i = R x_j R'\), where \(R, R'\) are products of Wirtinger generators.  Then we have
    \[ \lambda = P_0 \mu P_1 \mu \cdots \mu R x_j R' \mu \cdots  \mu P_d \mu^{-c}
    \]
    which yields
    \[
        x_j^{-1}\mu^{c-d} \lambda = x_j^{-1}\mu^{-i}P_0 \mu P_1 \mu \cdots \mu R x_j R' \mu \cdots  \mu P_d \mu^{-(d-i)},
    \]
    note that the right-hand side is an element of \(N(\mu)\).

    Fix \(g \in G\) satisfying \(g^{-1} \mu g = x_j\).  Then, beginning at a point on the arc \(A_j\) and following the orientation of the diagram, we read off \( g^{-1} \lambda g = Q_0 x_j Q_1 x_j \cdots x_j Q_d x_j^{-c}\), and rearranging similar to above, we find \(\mu^{-1} g^{-1}\mu^{c-d}\lambda g \in N(\mu)\).  This means that the product
    \[ (\mu^{-1} g^{-1}\mu^{c-d}\lambda g)(x_j^{-1}\mu^{c-d} \lambda) = \mu^{-1} (g^{-1} \mu^{c-d-1}\lambda g ) \mu^{c-d} \lambda
    \]
    lies in \(N(\mu)\). Conjugating the expression on the right by \(\mu^{-1}\) we arrive at \[(g^{-1} \mu^{c-d-1}\lambda g ) \mu^{c-d-1} \lambda \in N(\mu),\]
    from which the proposition follows.
\end{proof}

Recall that the braid \(\delta_n \in B_n\) is given by
\(\delta_n = \sigma_1 \cdots \sigma_{n-1}.\)

\begin{figure}
    \centering
    \includegraphics[width=0.28\linewidth]{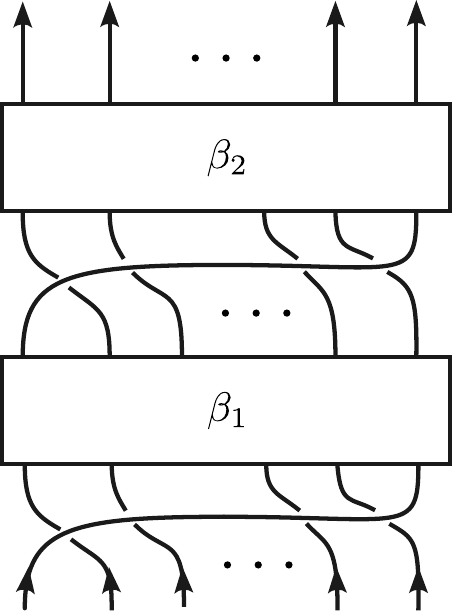}
    \caption{The braid \(\delta_n \beta_1 \delta_n \beta_2\).}
    \label{fig:deltan}
\end{figure}

\begin{corollary}
    \label{cor:cspropertyat2g-1}
    If \(K\) is the closure of an \(n\)-braid of the form \(\delta_n \beta_1 \delta_n \beta_2\), where \(\beta_1, \beta_2 \in B_n\) are positive braids, then \(K\) has the conjugate-slope property at \(2g(K)-1\).\footnote{Nie has recently informed the authors via private communication that he can prove the same conclusion holds for every braid-positive knot.}
\end{corollary}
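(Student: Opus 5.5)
The plan is to apply Proposition \ref{prop:two_bridges} to the standard closed braid diagram of \(\delta_n \beta_1 \delta_n \beta_2\) and then compute genus from the crossing count.

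\emph{Setup.} Write \(c\) for the number of crossings of the closed braid diagram \(D\). Since \(\beta_1, \beta_2\) are positive, \(D\) is a positive diagram, and \(c = 2(n-1) + |\beta_1| + |\beta_2|\), where \(|\beta|\) is word length. The closure of a positive braid is fibered, and Seifert's algorithm on the braid closure produces a minimal genus surface. Since \(K\) is a knot, this gives
\[
2g(K) - 1 = c - n.
\]

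\emph{Bridges.} Next I locate two bridges of length \(n-1\), one in each copy of \(\delta_n\). In \(\delta_n = \sigma_1 \sigma_2 \cdots \sigma_{n-1}\), with the convention that \(\sigma_i\) is drawn so that the strand entering at position \(i\) crosses over, a single strand travels from position \(1\) to position \(n\). It passes over each of the other \(n-1\) strands in turn, with no undercrossing along the way. That strand segment is therefore an arc of \(D\) serving as over-arc for at least \(n-1\) crossings, so it is a bridge of length \(\geq n-1\). The two copies of \(\delta_n\) give two such arcs \(A_i\) and \(A_j\). I have to check they are distinct arcs. An arc only terminates at undercrossings, so the arc through the first \(\delta_n\) could continue through \(\beta_1\) into the second \(\delta_n\) only if it never goes under anywhere in between. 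If that happened, the merged arc would be a single bridge of length at least \(2(n-1)\). I would handle this either by invoking Proposition \ref{prop:one_bridge}, which gives \(c - 2(n-1) \leq c-n\) for \(n \geq 2\), or, in the edge case, by conjugating or inserting a trivial modification.

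\emph{Apply Proposition \ref{prop:two_bridges}.} The proposition as stated asks for two bridges of length \(d\). Its proof, however, only uses that the first bridge has length \(d\) and that the second bridge also has length \(d\), read from its own arc. If the bridges are longer, I restrict to \(n-1\) of their crossings, which is harmless, since extra \(\mu\)'s in the \(P_i\) are absorbed into \(N(\mu)\). With \(d = n-1\) this gives the conjugate-slope property at
\[
c - d - 1 = c - n = 2g(K) - 1.
\]

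\emph{Main obstacle.} The delicate point is the bookkeeping in the bridge step. I must confirm that \(A_j\) appears as an under-arc somewhere in the word read from \(A_i\), which is needed to choose \(P_i = R x_j R'\). This holds because every arc of a knot diagram with \(c > 0\) starts at an undercrossing, so \(x_j\) occurs in the longitude word. I must also confirm that each bridge's length is exactly accounted for, so the exponent computation \(\mu^{c-d}\) in the proof of Proposition \ref{prop:two_bridges} is correct. Finally, the degenerate case \(n=1\) must be excluded, which is automatic since \(K\) is nontrivial.
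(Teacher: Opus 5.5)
Your proposal is correct and is essentially the paper's proof: Seifert's algorithm on the positive braid closure gives $2g(K)-1=c-n$, and Proposition~\ref{prop:two_bridges} with $d=n-1$, applied to the two bridges coming from the two copies of $\delta_n$, gives the conjugate-slope property at $c-n$. Two small remarks: the merged-arc case cannot occur, because under your convention a strand moves leftward only by passing under, so the over-strand leaving one $\delta_n$ at position $n$ must go under before it could reach position $1$ of the other; and $x_j$ appears in the word read from $A_i$ because $A_j$ is the over-arc of $n-1\geq 1$ crossings, not because arcs begin at undercrossings.
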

\begin{proof}
    Suppose that \(K\) is the closure of a positive \(n\)-braid of the form \(\delta_n \beta_1 \delta_n \beta_2\) (see Figure \ref{fig:deltan}), having \(c\) crossings in total. Seifert's algorithm readily produces a minimal genus Seifert surface having \(n\) Seifert circles, from which we calculate that \(g(K) = \frac{1}{2}(c-n+1)\).

    At the same time, since there is a diagram of \(K\) having two bridges of length \(n-1\) with each bridge arising from an instance of \(\delta_n\), Proposition \ref{prop:two_bridges} says that \(K\) has the conjugate-slope property at \(c-n = 2g(K)-1\).
\end{proof}

Many knots therefore have the conjugate-slope property  at \(2g(K)-1\), for example twist positive knots (positive braid knots where the braid word contains a positive full twist), which encompasses torus knots, \(1\)-bridge braids, algebraic knots, and Lorenz knots \cite{KM25}.  In fact, most L-space knots are twist positive, and so have the conjugate-slope property at \(2g(K)-1\) \cite[Section 3]{KM25}.

We also observe that from Theorem \ref{thm:con_slope_property} and \cite{BK24}, if \(K\) is a census L-space knot then \(K\) has the conjugate-slope property, because all census L-space knots are braid positive, with one exception.  The knot \(o9\)\_\(30634\) is an L-space knot which is not braid positive, but it is the closure of a \(3\)-strand braid with exactly one negative crossing.  Therefore it has the conjugate-slope property by Theorem \ref{thm:con_slope_property}.  With this evidence in hand and from connections to the L-space conjecture discussed in the next section (in particular, Corollary \ref{cor:conj_slope_bound}), we conjecture:

\begin{conjecture}
    Let \(K\) be a nontrivial L-space knot.  Then \(K\) has the conjugate-slope property at \(2g(K)-1\).
\end{conjecture}

More generally, as many non-L-space knots have the conjugate-slope property, we ask:

\begin{question}
    If \(K\) has the conjugate-slope property, does \(K\) have the conjugate-slope property at \(2g(K) -1\)?
\end{question}

In light of this question, let us briefly revisit Proposition \ref{prop:one_bridge}, using the notation found there. Let \(Q_K(z)\) denote the \(Q\)-polynomial of a knot \(K\) \cite{BLM86, Ho86}, equivalently it is the Kauffman polynomial \(F_K(a,z)\) evaluated at \(a=1\).  It is a result of Kidwell that
\[\deg_z(Q_K) \leq c-d,
\]
no matter the diagram \(D\) used to produce the values \(c\) and \(d\) appearing on the right-hand side \cite{Kidwell87}.  Combined with Corollary \ref{cor:conj_slope_bound} below, our techniques show the following.

\begin{corollary}
    \label{cor:c-dbound}
    For a diagram \(D\) of a nontrivial knot \(K\) in \(S^3\), let \(c(D)\) denote the number of crossings appearing in the diagram, and \(d(D)\) the bridge length of \(D\).  Then
    \[ 2g(K)-1 \leq c(D)-d(D)
    \]
    for every positive diagram \(D\) of \(K\).
\end{corollary}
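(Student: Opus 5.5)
The plan is to combine Proposition \ref{prop:one_bridge} with Corollary \ref{cor:conj_slope_bound}, which appears later in the paper. I take that corollary to say that if \(K\) has the conjugate-slope property at \(n\), then \(n \geq 2g(K)-1\). Concretely: given a positive diagram \(D\) with \(c = c(D)\) crossings and bridge length \(d = d(D)\), Proposition \ref{prop:one_bridge} gives a choice of \(\mu, \lambda\) with \(\mu^{c-d}\lambda \in N(\mu)\). In particular \(N(\mu^{c-d}\lambda) \cap N(\mu) \neq \emptyset\), so \(K\) has the conjugate-slope property at \(c-d\). Corollary \ref{cor:conj_slope_bound} then forces \(c-d \geq 2g(K)-1\). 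This works for every positive diagram, which is the claim.

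The one point to check is that \(c-d\) is a positive integer, since Definition \ref{conjugate-slope} requires \(n \in \mathbb{Z}^+\). As \(K\) is nontrivial, a positive diagram has \(c \geq 1\). It must also have \(c > d\). If \(c = d\), a single arc would be the over-arc of every crossing. Then every other arc would pass over nothing, and the Wirtinger presentation would collapse to a cyclic group, so \(K\) would be trivial. Equivalently, the identity \(\mu^{0}\lambda \in N(\mu)\) would put the longitude in the normal subsemigroup generated by \(\mu\). Abelianising to \(\mathbb{Z}\), an element of \(N(\mu)\) maps to a positive integer, while \(\lambda\) maps to \(0\), a contradiction. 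The same abelianisation argument shows \(c-d \geq 1\) in general.

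The main obstacle lies entirely in Corollary \ref{cor:conj_slope_bound}, the lower bound \(n \geq 2g(K)-1\) for any \(n\) at which the conjugate-slope property holds; the argument above is otherwise formal. I expect that bound to be proved roughly as follows. If \(n < 2g(K)-1\), then the slope \(n\) filling is not an L-space, and by known results an ordering of \(G\) should weakly order-detect some slope with value greater than \(n\), realised by a boundary-cofinal ordering. This contradicts Proposition \ref{prop:cofinal_not_allowed}. The details might instead go through Property (D) and the \(r\)-decay discussion. Since the corollary is stated in the paper, I will simply invoke it. As a consistency check, the result compares with Kidwell's \(\deg_z(Q_K) \leq c-d\), but no polynomial input is needed.
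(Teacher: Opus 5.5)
Your proposal is correct and is exactly the paper's argument: Proposition \ref{prop:one_bridge} gives the conjugate-slope property at \(c(D)-d(D)\), and Corollary \ref{cor:conj_slope_bound} then forces \(c(D)-d(D) \geq 2g(K)-1\). One small quibble: when \(c=d\), Proposition \ref{prop:one_bridge} yields an empty product, i.e.\ \(\lambda = id\), rather than \(\lambda \in N(\mu)\), so your abelianisation remark does not directly apply to that case. Your Wirtinger-collapse argument already rules it out, so this does not affect the proof.
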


We suspect it would be interesting to investigate whether there are knots admitting a positive diagram for which the inequality in Corollary \ref{cor:c-dbound} becomes an equality, and to better understand the role of Kidwell's \(Q\)-polynomial bound in this setting.

\section{The conjugate-slope property, Property (D), and weak order-detection of slopes}

\label{sec:weak_detection}

Our inspiration for investigating the conjugate-slope property lies in its connections with the L-space conjecture.  In this section we investigate the relationship between the conjugate-slope property, and other properties defined in the literature for the purpose of studying non-left orderability of fundamental groups arising from Dehn filling.

Use \(T(S)\) to denote the smallest root-closed, conjugacy-closed subsemigroup containing a set \(S\).

\begin{lemma}
    \label{TS_structure}
    Let \(G\) be a group and \(S \subset G\) any subset.  Set \(S_0 = N(S)\) and, for all \(i \geq 1\), set \(S_i = N(R(S_{i-1}))\).  Then \(T(S) = \bigcup_{i=0}^{\infty} S_i\).
\end{lemma}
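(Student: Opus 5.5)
The plan is to prove the two inclusions separately, writing \(U = \bigcup_{i=0}^{\infty} S_i\). The key preliminary observation is that the sets form an increasing chain \(S_0 \subseteq S_1 \subseteq S_2 \subseteq \cdots\). Indeed, for any set \(X\) we have \(X \subseteq R(X)\), taking \(n=1\) in the definition of \(R\). Also \(X \subseteq N(X)\), since \(N(X)\) contains every one-term product \(h g h^{-1}\) with \(h = id\). Finally, both \(R\) and \(N\) are monotone with respect to inclusion. Hence \(S_{i-1} \subseteq R(S_{i-1}) \subseteq N(R(S_{i-1})) = S_i\).

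For the inclusion \(U \subseteq T(S)\), I will show \(S_i \subseteq T(S)\) by induction on \(i\). For \(i=0\), \(T(S)\) is a conjugacy-closed subsemigroup containing \(S\). It therefore contains every conjugate \(h g h^{-1}\) with \(g \in S\), and every finite product of such conjugates, so \(N(S) \subseteq T(S)\). For the inductive step, suppose \(S_{i-1} \subseteq T(S)\) and let \(g \in R(S_{i-1})\), so \(g^n \in S_{i-1} \subseteq T(S)\) for some \(n \geq 1\). Because \(T(S)\) is root-closed, \(g \in T(S)\). Thus \(R(S_{i-1}) \subseteq T(S)\), and the same argument as in the base case gives \(S_i = N(R(S_{i-1})) \subseteq T(S)\).

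For the reverse inclusion \(T(S) \subseteq U\), it suffices to check that \(U\) is a root-closed, conjugacy-closed subsemigroup containing \(S\). Minimality of \(T(S)\) then gives \(T(S) \subseteq U\).
\begin{enumerate}
\item \(U\) contains \(S\), since \(S \subseteq N(S) = S_0\).
\item \(U\) is conjugacy-closed, since each \(S_i\) is of the form \(N(\cdot)\), and conjugating a product of conjugates gives another product of conjugates.
\item \(U\) is closed under multiplication. This is the one step that uses the chain property. If \(a \in S_i\) and \(b \in S_j\), then both lie in \(S_{\max(i,j)}\). That set is a semigroup, since concatenating two products of conjugates yields another such product, so \(ab \in S_{\max(i,j)} \subseteq U\).
\item \(U\) is root-closed. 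If \(g^n \in S_i\), then \(g \in R(S_i) \subseteq N(R(S_i)) = S_{i+1} \subseteq U\).
\end{enumerate}

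There is no real obstacle in this argument. The only point needing care is the monotonicity of the chain, which is used in the semigroup step and is established at the start.
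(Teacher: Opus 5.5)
Your proof is correct and takes essentially the same approach as the paper: you show that \(\bigcup_i S_i\) is a root-closed, conjugacy-closed subsemigroup containing \(S\), using the increasing chain, to get \(T(S) \subseteq \bigcup_i S_i\), and you show each \(S_i \subseteq T(S)\) for the reverse inclusion. You are more explicit than the paper, which asserts the chain property without proof, leaves conjugacy-closure implicit, and justifies the reverse inclusion in one sentence rather than by formal induction.
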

\begin{proof}
    Note that \(\bigcup_{i=0}^{\infty} S_i\) is a subsemigroup, since \(S_0 \subset S_1 \subset S_2 \subset \ldots\) and each \(S_i\) is a subsemigroup.  Moreover, \(\bigcup_{i=0}^{\infty} S_i\) is root-closed.  For if \(g \in G\) and \(g^n \in \bigcup_{i=0}^{\infty} S_i\) for some \(n>0\), then \(g^n \in S_k\) for some \(k \geq 0\), and so \(g \in R(S_k) \subset S_{k+1}\).  It follows that \(T(S) \subset \bigcup_{i=0}^{\infty} S_i\).

    On the other hand, since every element in \(\bigcup_{i=0}^{\infty} S_i\) is a result of successively taking products of conjugates of roots of elements of \(N(S)\), we conclude that \(\bigcup_{i=0}^{\infty} S_i \subset T(S)\).
\end{proof}

For the definitions and proofs that follow in this section, we fix a nontrivial knot \(K\) in \(S^3\) with complement \(M\), and let \(\{ \mu, \lambda \} \subset \pi_1(M)\) be a choice of meridian and longitude for \(K\).

\begin{definition}[\cite{Nie20}]
    \label{propertyD}
    Let \(g(K)\) be the Seifert genus of \(K\).  Then \(K\) \emph{has property (D)} if
    \begin{enumerate}[label=(\alph*)]
        \item given a homomorphism \(\rho : \pi_1(M) \rightarrow \mathrm{Homeo}_+(\mathbb{R})\), if \(t \in \mathbb{R}\) is fixed by both \(\rho(\mu)\) and \(\rho(\lambda)\) then \(t\) is fixed by \(\rho(g)\) for every \(g \in \pi_1(M)\); and
        \item \(\mu \in T(\mu^{2g(K)-1}\lambda, \mu^{-1})\).
    \end{enumerate}
\end{definition}

We begin with a study of Nie's Property (D) part (b) and the conjugate-slope property.  In the proofs below we use the notation \(g^h = h^{-1}gh\) as a shorthand for conjugation of group elements.

\begin{lemma}
    \label{lem:semigroup_equivalence}
    Suppose \(k \neq 0\) is an integer.
    With the same setup as above, \(N(\mu)\cap N(\mu^k\lambda)\not=\emptyset\) if and only if \(id\in N(\mu^k\lambda,\mu^{-1})\).
\end{lemma}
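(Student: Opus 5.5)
The plan is to prove the two directions separately, working entirely with the explicit description of $N(S)$ as the set of finite nonempty products of conjugates of elements of $S$. Here $N(\mu^k\lambda,\mu^{-1})$ means $N(\{\mu^k\lambda,\mu^{-1}\})$.

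For the forward direction, suppose $x\in N(\mu)\cap N(\mu^k\lambda)$. Write
\[
x=\prod_{j=1}^{m}(\mu)^{h_j}.
\]
Then
\[
x^{-1}=\prod_{j=m}^{1}(\mu^{-1})^{h_j},
\]
taken in reversed order, is a product of conjugates of $\mu^{-1}$. Hence $x^{-1}$ lies in $N(\mu^{-1})\subset N(\mu^k\lambda,\mu^{-1})$. Also $x\in N(\mu^k\lambda)\subset N(\mu^k\lambda,\mu^{-1})$. Since $N(\mu^k\lambda,\mu^{-1})$ is a semigroup, $\mathrm{id}=x\,x^{-1}$ lies in it.

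For the reverse direction, suppose $\mathrm{id}$ is a product of conjugates of $\mu^k\lambda$ and of $\mu^{-1}$, say
\[
\mathrm{id}=w_1w_2\cdots w_r,
\]
where each $w_i$ is of the form $(\mu^k\lambda)^{g}$ or $(\mu^{-1})^{g}$.

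First I dispose of the degenerate cases. If no factor is a conjugate of $\mu^k\lambda$, then $\mathrm{id}$ is a product of conjugates of $\mu^{-1}$. Abelianizing $G\to\mathbb{Z}$, where $\mu\mapsto 1$, gives $0=-r$, which is impossible. If no factor is a conjugate of $\mu^{-1}$, then $\mathrm{id}\in N(\mu^k\lambda)$. Since $\lambda$ maps to $0$ under abelianization, this gives $0=rk$, contradicting $k\ne 0$. So both types of factor occur.

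The key step is to move the $\mu^{-1}$-factors to one side. I use the identity
\[
ab=b\,a^{b}.
\]
Applying it repeatedly, I can rewrite $w_1\cdots w_r$ as $U\cdot V$. Here $U$ is the product of the $\mu^k\lambda$-type factors, possibly conjugated further, and $V$ is a product of conjugates of $\mu^{-1}$. This works because conjugating a conjugate of $\mu^k\lambda$ (respectively of $\mu^{-1}$) gives another conjugate of the same element, so the two classes are preserved as factors are commuted past each other.

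Thus $U\in N(\mu^k\lambda)$, and $V$ is a nonempty product of conjugates of $\mu^{-1}$. From $UV=\mathrm{id}$ we get $U=V^{-1}$. Inverting and reversing the product $V$ shows that $V^{-1}$ is a nonempty product of conjugates of $\mu$, so $V^{-1}\in N(\mu)$. Therefore $U\in N(\mu)\cap N(\mu^k\lambda)$, and this intersection is nonempty.

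The main point requiring care is the reordering step. The conjugations must be applied in the correct direction, and the nonemptiness of both collections of factors must be guaranteed. That nonemptiness is exactly where the hypothesis $k\neq0$ enters, through the abelianization argument above.
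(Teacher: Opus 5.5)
Your proof is correct and follows essentially the paper's approach. The forward direction is identical to the paper's. In the reverse direction, the paper repeatedly moves the block of $\mu^k\lambda$-conjugates preceding each $\mu^{-1}$-conjugate to the end of the product and then transfers that $\mu^{-1}$-factor to the other side, one at a time; you instead sort all the factors at once with $ab=b\,a^{b}$ and invert. Your abelianization argument also explicitly rules out the case where every factor is a conjugate of $\mu^{-1}$, a case the paper's proof leaves implicit.
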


\begin{proof}
    Assume first that \(x\in N(\mu)\cap N(\mu^k\lambda)\). Since \(x\in N(\mu)\), we may write \(x=\mu^{g_1}\cdots \mu^{g_m}\) for some \(g_1,\dots,g_m\in \pi_1(M)\). Similarly, \(x= (\mu^k\lambda)^{h_1}\dots(\mu^k\lambda)^{h_n}\) for some \(h_1,\dots,h_n\in \pi_1(M)\) since \(x \in N( \mu^k \lambda)\). Therefore, \[id= (\mu^k\lambda)^{h_1}\dots(\mu^k\lambda)^{h_n}(\mu^{g_1}\dots \mu^{g_m})^{-1}=(\mu^k\lambda)^{h_1}\dots(\mu^k\lambda)^{h_n}(\mu^{-1})^{g_m}\dots(\mu^{-1})^{g_1},\] from which it follows that \(id\in N(\mu^k\lambda,\mu^{-1})\).

    For the other direction, assume \(id\in N(\mu^k\lambda,\mu^{-1})\), and so we may write  \[id= h_1^{g_1}h_2^{g_2}\dots h_n^{g_n},\] where \(h_i\) is either \(\mu^k\lambda\) or \(\mu^{-1}\) and \(g_i\in \pi_1(M)\) for \(i=1,2,\dots,n\), where \(n>0\).

    Suppose first that \(h_j = \mu^k\lambda\) for all \(j = 1, \ldots, n\), and let \(\phi : \pi_1(M) \rightarrow \mathbb{Z}\) denote the abelianization map.  Then \(id= h_1^{g_1}h_2^{g_2}\cdots h_n^{g_n}\) implies \(0 = \phi(h_1^{g_1}h_2^{g_2}\cdots h_n^{g_n}) = nk\), a contradiction.

    So, there exists \(j\) such that \(h_j = \mu^{-1}\).  Assume \(h_{i_1}\) is the first occurrence of \(\mu^{-1}\) among \(h_1,h_2,\dots,h_n\). Set \(x_1=(\mu^k\lambda)^{g_1}(\mu^k\lambda)^{g_2}\cdots  (\mu^k\lambda)^{g_{i_1-1}}\), and we compute \begin{align*}
        id             & = x_1 (\mu^{-1})^{g_{i_1}}h_{i_1+1}^{g_{i_1+1}}\cdots h_n^{g_n}   \\
        id             & = (\mu^{-1})^{g_{i_1}'}h_{i_1+1}^{g_{i_1+1}'}\cdots h_n^{g_n'}x_1 \\
        \mu^{g_{i_1}'} & =h_{i_1+1}^{g_{i_1+1}'}\cdots h_n^{g_n'}x_1,
    \end{align*}
    where \(g_j'=g_jx_1^{-1}\) for \(j=i_1,i_1+1,\dots,n\). If \(h_j=\mu^{k}\lambda\) for \(j=i_1+1, \ldots, n\), then the previous equation gives \(N(\mu)\cap N(\mu^k\lambda)\not=\emptyset\) and we are done.  If not, assume \(h_{i_2}\) is the first occurrence of \(\mu^{-1}\) among \(h_{i_1+1},\dots,h_n\), and set \(x_2=h_{i_1+1}^{g_{i_1+1}'}\cdots h_{i_2-1}^{g_{i_2-1}'}\), and compute as before \begin{align*}
        \mu^{g_{i_1}'}                & =x_2 (\mu^{-1})^{g_{i_2}'} h_{i_2+1}^{g_{i_2+1}'}\cdots h_n^{g_n'}x_1,     \\
        \mu^{g_{i_1}'}                & = (\mu^{-1})^{g_{i_2}''} h_{i_2+1}^{g_{i_2+1}''} \cdots  h_n^{g_n''}x_2x_1 \\
        \mu^{g_{i_2}''}\mu^{g_{i_1}'} & =  h_{i_2+1}^{g_{i_2+1}''} \cdots  h_n^{g_n''}x_2x_1,
    \end{align*}
    where \(g_j''=g_j'x_2^{-1}\) for \(j=i_2,i_2+1,\dots,n\). If \(h_j=\mu^{k}\lambda\) for \(j=i_2+1, \ldots, n\) we are done; if not, we continue the rewriting procedure. This rewriting procedure stops after at most \(n-1\) steps, and from the final equation we conclude \(N(\mu)\cap N(\mu^k\lambda)\not=\emptyset\).
\end{proof}

\begin{lemma}
    \label{lem:root_closed}
    Suppose \(k >0 \) is an integer.
    With the same setup as above, \(\mu\in T(\mu^k\lambda,\mu^{-1})\) if and only if \(id \in T(\mu^k\lambda,\mu^{-1})\).
\end{lemma}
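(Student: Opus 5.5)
The forward direction is immediate from closure properties. Write \(T = T(\mu^k\lambda,\mu^{-1})\). It is a subsemigroup containing \(\mu^{-1}\) (since \(\mu^{-1}\in N(\mu^{-1}) = S_0 \subset T\) by Lemma \ref{TS_structure}). So if \(\mu \in T\), then \(id = \mu\cdot\mu^{-1}\in T\).

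For the converse, assume \(id \in T\). The plan is to exploit root-closure together with the fact that \(id\) is a root of itself in a trivial way. Root-closure says: if \(g^n \in T\) for some \(n>0\), then \(g \in T\). Since \(id^1 = id\), this alone gives nothing. But we can use \(id\) to absorb things, as follows.

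First, \(\mu^{-1}\in T\). I would aim to show that \(\mu^{k}\lambda\mu^{j}\) type elements, or better some power of \(\mu\), lands in \(T\). The most direct route is commensurability in the peripheral subgroup, which is free abelian on \(\mu,\lambda\).

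The key observation is that any element \(g\) with \(g^n = id\) lies in \(T\) by root-closure. But \(\pi_1(M)\) is torsion-free, so that is useless. Instead, I would use Lemma \ref{TS_structure}: \(id\in S_i\) for some \(i\), so \(id\) is a product of conjugates of elements of \(R(S_{i-1})\). Then I would try to cancel one factor to express an inverse of a conjugate of a generator-type element as a product in \(T\).

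Concretely, write \(id = a_1^{g_1}\cdots a_m^{g_m}\) with each \(a_r\in T\) (at the last stage, elements of \(R(S_{i-1})\subset T\)). Rotating cyclically, as in the proof of Lemma \ref{lem:semigroup_equivalence}, gives \((a_1^{-1})^{g_1'} \in T\) for a suitable conjugator, whenever \(m\ge 2\). If \(m=1\) then \(a_1=id\) already lies in \(R(S_{i-1})\), and we descend to a smaller \(i\). So \(T\) contains both some \(a\) and \(a^{-1}\), up to conjugacy.

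This is where I expect the main obstacle: turning "\(T\) contains an element together with (a conjugate of) its inverse" into "\(\mu\in T\)". I would rather argue globally. Let \(T^{-1}\cap T\) be the set \(H\) of elements of \(T\) whose inverse is also in \(T\). One checks that \(H\) is a conjugation-invariant subgroup, and that it is root-closed since \(T\) is. So \(H\) is a normal, root-closed (isolated) subgroup containing \(id\).

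To get \(\mu\in H\), I would show that \(T\cup T^{-1}\) cannot have a proper "positive part". Suppose \(\mu\notin T\). The idea is to quotient by \(H\) and obtain a partial order on \(\pi_1(M)/H\) in which \(\mu^{-1}\) and \(\mu^k\lambda\) are positive. Then one would try to derive a contradiction using the abelianization argument from Lemma \ref{lem:semigroup_equivalence}, with \(k>0\): every element of \(T\) built only from conjugates and roots has abelianization that is a positive combination of \(k\) and \(-1\). The identity could arise from a genuine relation mixing both, and then the cyclic-rewriting argument should show that \(\mu\) is a product of elements of \(T\).

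Honestly, the cleanest attempt I would try first is this: from \(id\in T\), run the rewriting of Lemma \ref{lem:semigroup_equivalence} at the top level to isolate one \(\mu^{-1}\) factor. This expresses its inverse, a conjugate of \(\mu\), as a product of elements of \(T\); conjugation-closure then gives \(\mu\in T\). The difficulty is ensuring that a \(\mu^{-1}\) factor appears at the top level rather than only inside roots. Handling that requires an induction on \(i\) in Lemma \ref{TS_structure}, and I expect that to be the crux.
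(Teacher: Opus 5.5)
Your forward direction is fine, and you correctly identify the difficulty in the converse, but you do not resolve it, so the converse is not proved. Neither side route helps. The set \(H=T\cap T^{-1}\) is indeed a normal isolated subgroup. But \(\mu^{-1}\in T\) always, so the lemma is just the statement \(\mu\in H\). Passing to \(\pi_1(M)/H\), where \(T/H\) is a normal root-closed cone containing \(\mu^{-1}H\) and \(\mu^k\lambda H\), gives no contradiction by itself. Likewise, abelianizing \(T\) tells you nothing, since \(T\) contains elements with abelianization \(-1\) and \(k\). Your cyclic-rotation observation is the right first step: some factor \(a_1\) of a relation \(id=a_1^{g_1}\cdots a_m^{g_m}\) has a conjugate of its inverse in \(T\). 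However, an arbitrary factor might be, for example, \(\mu^k\lambda\) itself, and knowing its inverse lies in \(T\) says nothing about \(\mu\).

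The missing idea is a descent with the right invariant, which is what the paper's Claim and chain of substitutions carry out.

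First, choose the factor outside \(T(\mu^k\lambda)\). This is possible because \(id\notin T(\mu^k\lambda)\), which is where \(k>0\) enters: for the abelianization \(\phi\), the set \(\phi^{-1}(\ZZ_{>0})\) is a normal root-closed semigroup containing \(\mu^k\lambda\) but not \(id\). It also uses that a product of conjugates of elements of \(T(\mu^k\lambda)\) stays in \(T(\mu^k\lambda)\).

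Second, the property ``\(x\in S_t\setminus T(\mu^k\lambda)\) and \(x^{-1}\in T\)'' descends one level. Rotating gives a conjugate \(x^g=rs\) with \(r\in R(S_{t-1})\setminus T(\mu^k\lambda)\) and \(s\in T\). Then \(r^{-1}=s\,(x^g)^{-1}\in T\), so \((r^n)^{-1}=(r^{-1})^n\in T\), where \(r^n\in S_{t-1}\). Also \(r^n\notin T(\mu^k\lambda)\) because \(T(\mu^k\lambda)\) is root-closed. The paper phrases this as: \(id=rs\) forces \(r\) and \(s\) to commute, so \(id=r^ns^n\).

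Starting from \(x=id\) and iterating, you reach \(y\in S_0=N(\mu^k\lambda,\mu^{-1})\) with \(y\notin T(\mu^k\lambda)\) and \(y^{-1}\in T\). Such a \(y\) must contain a \(\mu^{-1}\) factor. As in Lemma \ref{lem:semigroup_equivalence}, you can then write \(y=(\mu^{-1})^a b\) with \(b\in T\) (recall \(id\in T\)). This gives \(\mu^a=b\,y^{-1}\in T\), and hence \(\mu\in T\).
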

\begin{proof}
    One direction is clear: assuming \(\mu\in T(\mu^k\lambda,\mu^{-1})\), then \(id=\mu\mu^{-1}\in T(\mu^k\lambda,\mu^{-1})\).

    For the other direction, let \(S=\{\mu^k\lambda,\mu^{-1}\}\).  Then using the notation of Lemma \ref{TS_structure}, we begin with a preliminary claim.

    \noindent \textbf{Claim:} For each \(x\in S_t\setminus T(\mu^k\lambda)\), there exists \(g \in G\), \(r\in R(S_{t-1}) \setminus T(\mu^k\lambda)\), and \(s \in T(\mu^k \lambda, \mu^{-1})\) such that \(x^g = rs\).


    \noindent \textbf{Proof of claim:} Let \(x\in S_t\setminus T(\mu^k\lambda)\). Then \(x= s_1^{h_1}\cdots s_n^{h_n}\) for some \(h_1,\dots,h_n\in G\) and \(s_1,\dots,s_n\in R(S_{t-1})\). Since \(x \notin T(\mu^k \lambda)\) there exists a smallest index \(i_1 \in \{1,2,\dots,n\}\) such that \(s_{i_1}\in R(S_{t-1})\setminus T(\mu^k\lambda)\). Write \(x_1= s_1^{h_1} s_2^{h_2}\cdots s_{i_1-1}^{h_{i_1-1}}\). Then we can rewrite the expression for \(x\) as follows:
    \begin{align*}
        x & = x_1 s_{i_1}^{h_{i_1}}s_{i_1+1}^{h_{i_1+1}}\cdots s_n^{h_n}                            \\
        x & = s_{i_1}^{ h_{i_1}x_1^{-1}}s_{i_1+1}^{ h_{i_1+1}x_1^{-1}}\cdots s_n^{ h_nx_1^{-1}} x_1
    \end{align*}
    The second expression for \(x\) can be conjugated to yield
    \[
        x^{( h_{i_1}x_1^{-1})^{-1}}  =  s_{i_1} (s_{i_1+1}^{ h_{i_1+1}x_1^{-1}}\cdots s_n^{ h_nx_1^{-1}} x_1 )^{(h_{i_1}x_1^{-1})^{-1}}.
    \]

    Note that \((s_{i_1+1}^{ h_{i_1+1}x_1^{-1}}\cdots s_n^{ h_nx_1^{-1}} x_1 )^{(h_{i_1}x_1^{-1})^{-1}}\in S_t\). Thus, taking \(r = s_{i_1}\), \(s = (s_{i_1+1}^{ h_{i_1+1}x_1^{-1}}\cdots s_n^{ h_nx_1^{-1}} x_1 )^{(h_{i_1}x_1^{-1})^{-1}}\) and \(g = ( h_{i_1}x_1^{-1})^{-1}\)  proves the claim. \qed



    Armed with this claim, we proceed to the proof of the lemma.  Assume \(id \in T(\mu^k\lambda,\mu^{-1})\).  We first confirm that \(id \notin T(\mu^k \lambda)\).  To see this, let \(\phi: \pi_1(M) \rightarrow \mathbb{Z}\) denote the abelianization map, and set \(Q = \phi^{-1}(\mathbb{Z}_{>0})\).  One easily checks that \(Q\) is a root-closed, conjugacy-closed subsemigroup and that \(\mu^k \lambda \in Q\).  Therefore \(T(\mu^k \lambda) \subset Q\), and since \(id \notin Q\), this implies \(id \notin T(\mu^k \lambda)\).

    Thus there exists \(t \geq 0\) such that \(id\in S_t\setminus T(\mu^k\lambda)\).  We iteratively apply our claim as follows.

    We first apply our claim to \(id\),  yielding \(r_1 \in R(S_{t-1}) \setminus T(\mu^k \lambda)\) and \(s_1 \in T(\mu^k \lambda, \mu^{-1})\) such that \(id = r_1 s_1\).  Let \(k_1\) be the least positive integer such that \(r_1^{k_1} \in S_{t-1}\).

    Now for steps \(i=2, \ldots, t\) we apply our claim to \((r_{i-1})^{k_{i-1}} \in S_{t-{i-1}} \setminus T(\mu^k \lambda)\) and find \(g_i \in \pi_1(M)\), \(r_i \in R(S_{t-i})\) and \(s_i \in T(\mu^k \lambda, \mu^{-1})\) such that \(((r_{i-1})^{k_{i-1}})^{g_i} = r_i s_i\), and we let \(k_i\) denote the least positive integer such that \(r_i^{k_i} \in S_{t-i}\).

    Using these equations, we may substitute as follows.  The equation \(id = r_1 s_1\) implies that \(id = (r_1^{k_1})^{g_2} (s_1^{k_1})^{g_2}\), so that we may use the equation \((r_1^{k_1})^{g_2} = r_2 s_2\) to arrive at
    \[id = r_2s_2 (s_1^{k_1})^{g_2}.
    \]
    From this equation, we know that
    \[ id = (r_2^{k_2})^{g_3} ((s_2 (s_1^{k_1})^{g_2})^{k_2})^{g_3},
    \]
    and so we may substitute using \((r_2^{k_2})^g_3 = r_3 s_3\) to arrive at
    \[id = r_3s_3 ((s_2 (s_1^{k_1})^{g_2})^{k_2})^{g_3}
        .
    \]
    Continuing this pattern of substitutions, we arrive at
    \[ id = r_ts_t(s_{t-1}( \cdots (s_2 (s_1^{k_1})^{g_2})^{k_2} \cdots )^{k_{t-1}})^{g_t},
    \]
    and note that this equation is of the form \(id = r_t s\) where \(r_t \in S_0 \setminus T(\mu^k \lambda)\) and \(s \in T(\mu^k \lambda, \mu^{-1})\).  Recalling that \(S_0 = N(\mu^k \lambda, \mu^{-1})\), we may therefore write \[r_t^{k_t}= h_1^{f_1}h_2^{f_2}\cdots h_m^{f_m},\] where \(h_i \in \{\mu^k\lambda, \mu^{-1}\}\), \(f_i\in \pi_1(M)\) for \(i=1,2,\dots,m\).  Since \(r_t \notin T(\mu^k \lambda)\), there exists \(j\) such that \(h_j = \mu^{-1}\). As in the proof of Lemma \ref{lem:semigroup_equivalence}, this means we can write \[r_t^{k_t}= (\mu^{-1})^{a}b\] for some \(a\in \pi_1(M)\) and \(b\in N(\mu^k\lambda,\mu^{-1})\).

    Therefore as a final step, we use the equation \(id = r_ts\) to deduce that \(id = r_t^{k_t} s^{k_t}\), and substitute to find
    \[ id = (\mu^{-1})^{a}bs^{k_t}
    \]
    from which we get
    \[
        \mu = (bs^{k_t})^{a^{-1}} \in T(\mu^k \lambda, \mu^{-1}).
    \]

\end{proof}

Lemmas \ref{lem:root_closed} and \ref{lem:semigroup_equivalence} allow  us to re-cast some of our definitions.

\begin{proposition}
    Suppose \(K\) is a nontrivial knot in \(S^3\) and \(g(K)\) is the Seifert genus of \(K\).
    \begin{enumerate}
        \item The knot \(K\) has the conjugate-slope property at \(n\) if and only if \(id \in N(\mu^n \lambda, \mu^{-1})\).
        \item  The knot \(K\) satisfies Property (D) part (b) if and only if \(id \in T(\mu^{2g(K)-1}\lambda, \mu^{-1})\).
    \end{enumerate}
\end{proposition}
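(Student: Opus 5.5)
Both parts are direct translations of the two preceding lemmas, so the plan is simply to unwind the definitions and invoke Lemma \ref{lem:semigroup_equivalence} and Lemma \ref{lem:root_closed} with the appropriate value of \(k\).

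For part (1), recall from Definition \ref{conjugate-slope} that \(K\) has the conjugate-slope property at \(n\) exactly when \(N(\mu^n\lambda)\cap N(\mu)\neq\emptyset\). Since \(n\) is a positive integer, in particular \(n\neq 0\), Lemma \ref{lem:semigroup_equivalence} applies with \(k=n\). It states that this intersection is nonempty if and only if \(id\in N(\mu^n\lambda,\mu^{-1})\). This is precisely the claimed equivalence, and nothing further is needed.

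For part (2), Definition \ref{propertyD}(b) says that Property (D) part (b) means \(\mu\in T(\mu^{2g(K)-1}\lambda,\mu^{-1})\). I will apply Lemma \ref{lem:root_closed} with \(k=2g(K)-1\). The one point to verify is the hypothesis \(k>0\) of that lemma. Since \(K\) is nontrivial, \(g(K)\geq 1\), so \(2g(K)-1\geq 1>0\). The lemma then gives \(\mu\in T(\mu^{2g(K)-1}\lambda,\mu^{-1})\) if and only if \(id\in T(\mu^{2g(K)-1}\lambda,\mu^{-1})\), which is the statement.

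There is no real obstacle here; all the substantive work was done in the two lemmas. The only things to check are that the integer hypotheses hold (\(n\neq0\) in the first case, \(2g(K)-1>0\) in the second, using nontriviality of \(K\)) and that the fixed meridian and longitude match those used in the lemmas.
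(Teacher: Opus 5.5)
Your proposal is correct and matches the paper's approach: the paper obtains this proposition directly as a restatement via Lemmas \ref{lem:semigroup_equivalence} and \ref{lem:root_closed}. Your explicit checks that \(n \neq 0\) and that \(2g(K)-1 \geq 1\) (by nontriviality of \(K\)) are exactly the hypotheses those lemmas require.
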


\begin{corollary}
    \label{cor:equiv}
    The conjugate-slope property at \(2g(K)-1\) is equivalent to Property (D) part (b) if and only if \(id \in T(\mu^{2g(K)-1}\lambda, \mu^{-1})\) is equivalent to \(id \in N(\mu^{2g(K)-1} \lambda, \mu^{-1})\).
\end{corollary}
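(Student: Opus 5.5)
This corollary is essentially a restatement of the preceding Proposition, so the proof will simply combine its two parts. By Definition~\ref{conjugate-slope}, the conjugate-slope property at \(2g(K)-1\) means \(N(\mu^{2g(K)-1}\lambda)\cap N(\mu)\neq\emptyset\). Since \(K\) is nontrivial, \(g(K)\geq 1\) and so \(k=2g(K)-1\) is a positive integer. Lemma~\ref{lem:semigroup_equivalence} (which needs only \(k\neq 0\)) then shows this is equivalent to \(id\in N(\mu^{2g(K)-1}\lambda,\mu^{-1})\). This is part (1) of the Proposition specialised to \(n=2g(K)-1\).

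Next, Property (D) part (b) is by definition \(\mu\in T(\mu^{2g(K)-1}\lambda,\mu^{-1})\). Lemma~\ref{lem:root_closed}, applied with \(k=2g(K)-1>0\), shows this is equivalent to \(id\in T(\mu^{2g(K)-1}\lambda,\mu^{-1})\). This is part (2) of the Proposition.

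Having replaced each property by an equivalent membership statement, I conclude: ``conjugate-slope at \(2g(K)-1\) \(\Leftrightarrow\) Property (D)(b)'' holds exactly when ``\(id\in N(\mu^{2g(K)-1}\lambda,\mu^{-1})\) \(\Leftrightarrow\) \(id\in T(\mu^{2g(K)-1}\lambda,\mu^{-1})\)'' holds. This is pure logical substitution of equivalents, so both directions of the corollary are immediate. I will also note that \(N(S)\subset T(S)\), since \(S_0=N(S)\) in Lemma~\ref{TS_structure}. Hence one implication between the membership statements always holds, and the real content is the converse.

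There is no genuine obstacle here. The only point requiring care is verifying that the hypotheses of the two lemmas apply: the integer \(2g(K)-1\) must be nonzero for Lemma~\ref{lem:semigroup_equivalence} and positive for Lemma~\ref{lem:root_closed}. Both follow from the nontriviality of \(K\), which gives \(g(K)\geq 1\).
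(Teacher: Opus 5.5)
Your proposal is correct and follows the paper's route: the corollary is obtained by substituting the two equivalences of the preceding Proposition, which themselves come from Lemma~\ref{lem:semigroup_equivalence} (with \(k=2g(K)-1\neq 0\)) and Lemma~\ref{lem:root_closed} (with \(k=2g(K)-1>0\)). Your side remark that \(N(S)\subset T(S)\) is also correct. It means only the implication \(id\in T(\mu^{2g(K)-1}\lambda,\mu^{-1})\Rightarrow id\in N(\mu^{2g(K)-1}\lambda,\mu^{-1})\) carries real content, consistent with the paper's use of this inclusion in Theorem~\ref{thm:con_slope_to_D}.
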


We believe that the properties in Corollary \ref{cor:equiv} are equivalent.  However, it should be noted that \(id \in N(S)\) and \(id \in T(S)\) are not equivalent in general, even when \(S\) is a subset of the peripheral subgroup of a knot group.  We show this in the next example.

\begin{example}
    Consider the trefoil knot group, which is isomorphic to the braid group
    \[B_3 = \langle \sigma_1, \sigma_2 \mid \sigma_1 \sigma_2 \sigma_1 = \sigma_2 \sigma_1 \sigma_2 \rangle.
    \]
    The center of \(B_3\) is infinite cyclic, generated by \(\Delta^2 = (\sigma_1 \sigma_2 \sigma_1)^2\), with this element corresponding to the homotopy class of the regular fibre in the Seifert fibering of the trefoil complement.  It is therefore a peripheral element.  A meridian for the trefoil is represented by \(\sigma_1\).

    Suppose that \(id \in N(\Delta^2, \sigma_1^{-6})\).  Using that \(\Delta^2\) is central, we conclude that there exist \(h_1, \ldots, h_k \in B_3\) and \(\ell >0\) such that
    \[ \prod_{i=1}^k h_i \sigma_1^6 h_i^{-1} = \Delta^{2 \ell}.
    \]
    There is a quotient map
    \[ q: B_3 \rightarrow B_3/\langle \Delta^2 \rangle \cong \mathrm{PSL}(2, \mathbb{Z}), \text{ with } \sigma_1^6 \mapsto
        \begin{bmatrix}
            1 & 6 \\
            0 & 1 \\
        \end{bmatrix},
    \]
    and applying this quotient map, one concludes that \(\begin{bmatrix}
        1 & 6 \\
        0 & 1 \\
    \end{bmatrix}\) is generalised torsion in \(\mathrm{PSL}(2, \mathbb{Z})\).  This contradicts \cite[Proposition 4.1]{CHQpreprint}, where they show that there exists a  product of conjugates of \(\begin{bmatrix}
        1 & k \\
        0 & 1 \\
    \end{bmatrix} \in \mathrm{PSL}(2, \mathbb{Z})\) that is equal to the identity if and only if \(|k| \leq 5\). Therefore, \(id \notin N(\Delta^2, \sigma^{-6})\).

    On the other hand, \(T(\Delta^2, \sigma_1^{-6})\) contains both \(\sigma_1 \sigma_2 \sigma_1\) and \(\sigma_1^{-1}\), and therefore it contains \(\sigma_2\). However \(\sigma_2\) is conjugate to \(\sigma_1\), so \(\sigma_1 \in T(\Delta^2, \sigma_1^{-6})\). It follows that \(id \in T(\Delta^2, \sigma_1^{-6})\).  This shows that the conditions \(id \in N(\Delta^2, \sigma^{-6})\) and \(id \in T(\Delta^2, \sigma_1^{-6})\) are indeed different. 
\end{example}

Next, let us consider the connections between the conjugate-slope property and Nie's Property (D) part (a).  For this, we need to briefly review properties of cofinal elements and introduce the cofinal detection property.

\begin{lemma}
    \label{lem:cofinal_products}
    Suppose that \(G\) is a countable group with a left-ordering \(\mathfrak{o}\). If \(g \in G\) is \(\mathfrak{o}\)-cofinal, then every element of \(T(g)\) is \(\mathfrak{o}\)-cofinal and has the same sign as \(g\) relative to the ordering \(\mathfrak{o}\).
\end{lemma}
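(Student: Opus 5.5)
We will prove the lemma by induction along the filtration of Lemma \ref{TS_structure}. With \(S=\{g\}\), we have \(T(g)=\bigcup_{i\ge 0} S_i\), where \(S_0=N(g)\) and \(S_i=N(R(S_{i-1}))\). We may assume \(g>_\ooo id\); the case \(g<_\ooo id\) is symmetric, either by replacing \(\ooo\) with its opposite ordering or by running the same argument with inequalities reversed. Let \(\mathcal{C}\) be the set of \(\ooo\)-cofinal positive elements. It suffices to show three things: \(\mathcal{C}\) is closed under conjugation, closed under products, and closed under taking positive roots. Then \(S_0\subset\mathcal{C}\), and inductively every \(S_i\subset\mathcal{C}\), so \(T(g)\subset\mathcal{C}\).

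First we reformulate cofinality. Call \(h>id\) cofinal if for every \(x\in G\) there is \(n>0\) with \(x<h^n\); for such \(h\), the negative powers \(h^{-n}\) then also bound every element from below. To see this, suppose \(h^{-n}\le x\) fails for all \(n\). Then \(x<h^{-n}\), and left-multiplying by \(h^{n}\) gives \(h^{n}x<id\) for all \(n\). Applying upward cofinality to \(x^{-1}\) gives some \(m\) with \(x^{-1}<h^{m}\). We will derive a contradiction from these facts, possibly after a small manipulation. This step should be checked carefully, since left-orderings are only left-invariant.

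The three closure properties go as follows.

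\emph{Roots.} If \(r^n=h\in\mathcal{C}\) with \(n>0\), then \(r>id\), since otherwise \(r^n\le id\). The convex hull of \(\langle r\rangle\) contains \(\langle h\rangle\), whose convex hull is all of \(G\), so \(r\) is cofinal.

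\emph{Products.} If \(a,b\in\mathcal{C}\), then \(ab>a>id\). To show \(ab\) is cofinal, we aim to show that \((ab)^n\ge a^n\) or \(\ge b^n\)-type bounds hold. A cleaner route is to note that for positive \(a,b\) we have \((ab)^n > \) every element of the form \(a\cdots\), by writing \((ab)^n=a\,b\,a\,b\cdots\) and using that multiplying on the left by a positive element preserves positivity of the suffix. Specifically, \((ab)^n\ge a\) and so on. Since this only yields weak bounds, we instead use convex subgroups. The convex hull \(C(\langle ab\rangle)\) may fail to be a subgroup, so we consider the largest convex subgroup \(H\) not containing \(ab\), or the convex subgroup generated; here is where countability enters, via a dynamic realization \(\rho:G\to\mathrm{Homeo}_+(\mathbb{R})\). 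In the dynamic realization, \(h\) is cofinal and positive exactly when \(\rho(h)(t)>t\) for all \(t\) and \(\rho(h)^n(0)\to\pm\infty\); equivalently, \(\rho(h)\) has no fixed points and moves \(0\) upward. In that language, conjugation, products and roots are immediate. A fixed-point-free increasing homeomorphism conjugated stays fixed-point free and increasing. If \(\rho(a)(t)>t\) and \(\rho(b)(t)>t\) for all \(t\), then \(\rho(ab)(t)>t\). If \(\rho(r)^n\) has no fixed points then neither does \(\rho(r)\), and it moves points upward.

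\emph{Conjugation.} This is handled by the dynamic picture above.

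The main obstacle is establishing the equivalence between \(\ooo\)-cofinality of \(h>id\) and \(\rho(h)\) being fixed-point free. The plan for this step is as follows. If \(\rho(h)\) fixed some \(t\), then the orbit points \(\rho(x)(0)\) lying above \(t\) would never be reached by \(\rho(h^n)(0)\), contradicting cofinality, since \(x<_\ooo y\) iff \(\rho(x)(0)<\rho(y)(0)\). Conversely, if \(\rho(h)\) has no fixed point, then \(\rho(h)^n(0)\to\pm\infty\), because a bounded monotone orbit would converge to a fixed point. Hence every \(\rho(x)(0)\) is trapped between \(\rho(h)^{-n}(0)\) and \(\rho(h)^n(0)\), so \(h\) is cofinal. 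Positivity of \(h\) corresponds to \(\rho(h)(0)>0\), and for a fixed-point-free map this gives \(\rho(h)(t)>t\) everywhere, so sign is preserved by the product, conjugation and root operations. This dynamic reformulation replaces the clumsy order-theoretic arguments sketched above.
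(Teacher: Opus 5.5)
Your final argument is the paper's own proof: run the filtration of Lemma \ref{TS_structure}, characterise positive \(\ooo\)-cofinal elements in a dynamic realisation \(\rho\) as those \(h\) with \(\rho(h)(t)>t\) for all \(t\), and note that this condition is preserved by conjugation, products and roots; your sketch of that characterisation also fills in the ``one can check'' that the paper leaves to the reader. Drop the tentative claim in your second paragraph, because it is false: in the affine group \(\mathbb{Z}[1/2]\rtimes\mathbb{Z}\), ordered via the orbit of \(\sqrt{2}\), the element \(h\colon x\mapsto 2x\) has \(\{h^n\}\) cofinal upward but bounded below by \(x\mapsto x-2\). For the same reason, a fixed point \(t<0\) of \(\rho(h)\) must be ruled out using the negative powers (\(\rho(h)^{-n}(0)>t\)) together with the downward half of cofinality, not the forward orbit as your last paragraph's wording suggests.
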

\begin{proof}
    We consider the case where \(g >_\mathfrak{o}id\), the case of \(g<_\mathfrak{o}id\) being identical.

    By Lemma \ref{TS_structure}, it suffices to show that every element of \(N(S)\) and \(R(S)\) is positive and \(\mathfrak{o}\)-cofinal whenever all elements of \(S\) are positive and \(\mathfrak{o}\)-cofinal. To this end, suppose \(S\) is a collection of positive, cofinal elements. Let \(\rho:G \rightarrow \mathrm{Homeo}_+(\mathbb{R})\) denote the dynamic realisation of the ordering \(\mathfrak{o}\). (For background on dynamic realisations and cofinality, see for example \cite{Navas10, GOD, CR16, BC23}).    One can check that \(h \in S\) is positive and \(\mathfrak{o}\)-cofinal if and only if \(\rho(h)(x)> x\) for all \(x \in \mathbb{R}\).  Thus, if \(h\) is \(\mathfrak{o}\)-cofinal, positive, and \(f \in G\), then
    \[ \rho(f^{-1}hf)(x) > \rho(f^{-1})(\rho(f(x))) = x
    \]
    for all \(x \in \mathbb{R}\), so that the conjugate of an arbitrary positive \(\mathfrak{o}\)-cofinal element is again positive and \(\mathfrak{o}\)-cofinal.  Similarly, if both \(f\) and \(h\) are positive and \(\mathfrak{o}\)-cofinal, then for all \(x \in \mathbb{R}\) we have
    \[\rho(fh)(x) > \rho(f)(x) >x,
    \]
    so that the product \(fh\) is again positive and \(\mathfrak{o}\)-cofinal.  It follows that if \(S\) is a collection of positive and \(\mathfrak{o}\)-cofinal elements, then so is \(N(S)\).

    Likewise, if \(h \in S\) satisfies \(\rho(h)(x) >x\) for all \(x \in \mathbb{R}\) and \(f^n = h\) for some \(n>0\), then \(\rho(f)(x)>x\) for all \(x \in \mathbb{R}\) as well.  So whenever \(S\) is a collection of positive \(\mathfrak{o}\)-cofinal elements, so is \(R(S)\).  This proves the lemma.
\end{proof}

\begin{proposition}
    \label{prop:cofinal_not_allowed}
    If \(K\) is a knot that has the conjugate-slope property at \(n\), then no slope in \((n, \infty)\) is weakly order-detected by a boundary-cofinal left-ordering.
\end{proposition}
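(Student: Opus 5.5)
We argue by contradiction. Suppose \(\ooo\) is a boundary-cofinal left-ordering of \(G=\pi_1(M)\) whose slope \(s(\ooo)\) lies in \((n,\infty)\). The plan is to show that both \(\mu^{-1}\) and \(\mu^n\lambda\) are \(\ooo\)-cofinal elements of the same sign. Once this is done, the conjugate-slope property, recast through Lemma \ref{lem:semigroup_equivalence} as \(id \in N(\mu^n\lambda,\mu^{-1})\), contradicts Lemma \ref{lem:cofinal_products}.

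\textbf{Step 1: peripheral elements off the line are cofinal.} Because \(\ooo\) is boundary-cofinal, the peripheral subgroup \(\pi_1(\partial M)\cong \ZZ^2\) is \(\ooo\)-cofinal. Fix a peripheral element \(\gamma\) lying strictly on one side of the line \(L(\ooo)\). We claim \(\gamma\) is \(\ooo\)-cofinal. Every element of \(\ZZ^2\) is bounded above and below, in the restricted ordering, by powers of \(\gamma\). Indeed, for any \(p\in\ZZ^2\), the elements \(\gamma^m p^{-1}\) lie strictly on the positive side of \(L(\ooo)\) once \(m\) is large, because \(\gamma\) has positive distance from \(L(\ooo)\). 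Since \(\ZZ^2\) is cofinal in \(G\), the convex hull of \(\langle\gamma\rangle\) is therefore all of \(G\).

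\textbf{Step 2: the signs of \(\mu^{-1}\) and \(\mu^n\lambda\) agree.} Use the coordinates \(\mu\leftrightarrow(1,0)\) and \(\lambda\leftrightarrow(0,1)\), so that the slope \(p/q\) corresponds to \(\mu^p\lambda^q\). Suppose \(s(\ooo)=r\in(n,\infty)\). Then \(L(\ooo)\) is spanned by \((r,1)\), and the sign of an element \((a,b)\) is determined by the sign of the linear functional \(f(a,b)=a-rb\), up to an overall sign \(\pm\). We compute
\[f(\mu^{-1}) = -1 \quad\text{and}\quad f(\mu^n\lambda) = n-r.\]
Both values are strictly negative, since \(r>n\). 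Hence neither element lies on \(L(\ooo)\), and the two elements have the same sign. If the overall sign is flipped, we replace \(\ooo\) by its opposite ordering, which is still boundary-cofinal. So we may assume both elements are positive. By Step 1, both are then \(\ooo\)-cofinal.

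\textbf{Step 3: conclusion.} Every element of \(N(\mu^n\lambda,\mu^{-1})\) is a product of conjugates of the two elements. Applying Lemma \ref{lem:cofinal_products}, or more precisely the argument in its proof with \(S=\{\mu^n\lambda,\mu^{-1}\}\), which allows several positive cofinal generators, every such element is positive. In particular \(id \notin N(\mu^n\lambda,\mu^{-1})\). This contradicts the conjugate-slope property at \(n\), via Lemma \ref{lem:semigroup_equivalence} or directly from the proposition recasting the definition.

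The main point requiring care is Step 1: verifying that a peripheral element off the detected line is cofinal in \(G\), not merely in \(\ZZ^2\). The slope \(\infty\), corresponding to \(\lambda\), lies outside \((n,\infty)\), so no degenerate case arises there. The only other thing to pin down is the slope convention, so that the sign computation in Step 2 is correct.
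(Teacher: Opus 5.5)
Your proposal is correct and follows essentially the paper's argument: the slope lies in \((n,\infty)\), so \(\mu\) and \(\mu^n\lambda\) have opposite signs, and Lemma \ref{lem:cofinal_products} (in its multi-generator form) then rules out the conjugate-slope relation. The differences are cosmetic. The paper compares the signs of the two sides of \(\prod h_i^{-1}(\mu^n\lambda)h_i=\prod g_j^{-1}\mu g_j\) directly rather than passing through \(id\in N(\mu^n\lambda,\mu^{-1})\). Your Step 1 also makes explicit why boundary-cofinality forces \(\mu\) and \(\mu^n\lambda\) to be \(\ooo\)-cofinal, which the paper leaves implicit.
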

\begin{proof}
    Suppose that \(K\) is a knot in \(S^3\) with complement \(M\), and that \(K\) has the conjugate-slope property at \(n\).  Then we may write
    \[ \prod_{i=1}^k h_i^{-1}(\mu^n \lambda)h_i = \prod_{j=1}^\ell g_j^{-1}\mu g_j  \tag{\textasteriskcentered}
    \]
    for some \(k, \ell \in \mathbb{N}\), and \(h_1, \ldots, h_k, g_1, \ldots, g_\ell \in \pi_1(M)\).

    Now suppose that \(\mathfrak{o}\) is a left-ordering of \(\pi_1(M)\) such that \(s(\mathfrak{o}) \in (n, \infty)\).  Then relative to the left-ordering \(\mathfrak{o}\), the elements \(\mu\) and \(\mu^n \lambda\) have opposite signs.  It follows that the two sides of equation \((*)\) represent elements having opposite sign, by Lemma \ref{lem:cofinal_products}.  This is a contradiction.
\end{proof}

\begin{corollary}
    \label{cor:conj_slope_bound}
    Suppose that \(K\) is a knot in \(S^3\) with genus \(g\).  If \(K\) has the conjugate-slope property at \(n\), then \(n \geq 2g-1\).
\end{corollary}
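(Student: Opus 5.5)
We argue by contradiction, combining Proposition \ref{prop:cofinal_not_allowed} with a known existence result for boundary-cofinal orderings that detect large slopes. Suppose \(K\) has the conjugate-slope property at some \(n < 2g-1\). By Proposition \ref{prop:cofinal_not_allowed}, no slope in \((n,\infty)\) is then weakly order-detected by a boundary-cofinal left-ordering of \(\pi_1(M)\). Since \(n<2g-1\), the interval \((n,\infty)\) contains the nonempty open interval \((n, 2g-1)\), and also all slopes strictly between \(n\) and \(2g-1\). So it suffices to produce a boundary-cofinal left-ordering whose slope lies in \((n,2g-1)\), or even anywhere in \((n,\infty)\).

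To construct this ordering, we would first try the results of \cite{BC23}. There it is shown that the set of slopes detected by boundary-cofinal orderings (equivalently, by representations \(\rho:\pi_1(M)\to \mathrm{Homeo}_+(\RR)\) without global fixed points on the peripheral subgroup) is tied to the taut-foliation and Heegaard Floer picture. Concretely, for every nontrivial knot the rational slopes in \((-\infty, 2g-1)\), and in particular a dense set of them, are detected by co-orientable taut foliations which are transverse to the boundary (Gabai; Roberts; and for non-L-space knots all slopes). By Thurston's universal circle construction, such foliations give left-orderings of \(\pi_1(M)\) detecting the slope, and these orderings are boundary-cofinal.

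Concretely, we would proceed in three steps. First, choose a rational slope \(p/q\in(n,2g-1)\); such a slope exists since \(n\) is an integer with \(n\le 2g-2\). Second, invoke the existence of a co-orientable taut foliation on \(M\) meeting \(\partial M\) in a suspension foliation of slope \(p/q\). By Roberts' results for fibred knots, and by the sutured-manifold/Gabai-type constructions used in \cite{BC23}, all slopes in \((-\infty,2g-1)\) are realised in this way. Third, pass to the universal circle action lifted to \(\RR\). This gives a left-ordering \(\ooo\) with \(s(\ooo)=[p/q]\), and we must check that it is boundary-cofinal, which amounts to checking that the peripheral subgroup has no global fixed point in the dynamic realisation. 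Proposition \ref{prop:cofinal_not_allowed} then gives a contradiction, so \(n\ge 2g-1\).

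The main obstacle is the second and third steps together: producing, for an arbitrary knot (not necessarily fibred), a boundary-cofinal ordering detecting some slope in \((n,2g-1)\). If the foliation route is too delicate in general, the fallback is to quote directly the statement from \cite{BC23} that every slope in \((-\infty,2g-1)\) that is foliation-detected is order-detected by a boundary-cofinal ordering, together with the known fact that such foliation-detected slopes are dense in that interval. Either route yields a slope in \((n,2g-1)\), which gives the required contradiction.
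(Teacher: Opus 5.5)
Your skeleton matches the paper's: assume $n<2g-1$, then contradict Proposition~\ref{prop:cofinal_not_allowed} by exhibiting one boundary-cofinal left-ordering of $\pi_1(M)$ whose slope lies in $(n,\infty)$. The gap is in how you produce that ordering, because the foliation facts you rely on are not known theorems.
\begin{itemize}
\item The claim that for every nontrivial knot all (or a dense set of) rational slopes in $(-\infty,2g-1)$ are realised by co-orientable taut foliations transverse to $\partial M$ is, for L-space knots, exactly the open foliation half of the L-space conjecture.
\item The claim of ``all slopes for non-L-space knots'' is likewise open.
\item The results you name do not come close. Gabai's foliation meets $\partial M$ in the longitudinal slope $0$. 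Roberts' constructions need fibredness and realise intervals like $(-\infty,1)$ or $(-1,1)$, depending on the monodromy. Li--Roberts give only some open interval around $0$ for general knots.
\end{itemize}
Since $n$ may be as large as $2g-2$, you need a slope beyond $2g-2\geq 2$ when $g\geq 2$, and none of these supply one. Your fallback has the same defect: the ``known'' density of foliation-detected slopes in $(-\infty,2g-1)$ is precisely what is unknown. You also leave unchecked whether the universal-circle orderings are boundary-cofinal.

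The missing idea is that you never need a slope strictly below $2g-1$; the endpoint itself is available. The paper quotes \cite[Proof of Corollary 1.4]{BGHpreprint}, where it is shown that for every nontrivial knot the slope $2g(K)-1$ is detected by a boundary-cofinal left-ordering. Since $2g-1\in(n,\infty)$ whenever $n<2g-1$, Proposition~\ref{prop:cofinal_not_allowed} gives the contradiction at once, without any of the unproven taut-foliation statements.
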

\begin{proof}
    By \cite[Proof of Corollary 1.4]{BGHpreprint}, for every nontrivial knot \(K\) in \(S^3\), the slope \(2g(K)-1\) is detected by a boundary-cofinal left-ordering.  The conclusion of the corollary then follows from an application of Proposition \ref{prop:cofinal_not_allowed}.
\end{proof}

If a knot \(K\) in \(S^3\) with complement \(M\) satisfies Property (D) part (a), then all left-orderings of \(\pi_1(M)\) are boundary-cofinal.  We propose to combine the conjugate-slope property with the following weaker condition, so that together our two conditions are a weakening of Property (D).

\begin{definition}
    \label{def:cofinal}
    A knot \(K\) in \(S^3\) is said to have the \emph{cofinal detection property} if every slope that is weakly order-detected is detected by a boundary-cofinal left-ordering of \(\pi_1(M)\).
\end{definition}

By \cite[Theorem 1.7]{BC23}, every knot \(K\) in \(S^3\) with complement \(M\) for which the slope map \(s : \mathrm{LO}(\pi_1(M)) \rightarrow \mathcal{S}(M)\) is not surjective satisfies Property (D) part (a), and so has the cofinal detection property.  However, the cofinal detection property is a strictly weaker property than Property (D) part (a), as the next example shows.

\begin{example}
    In this example, we show that the knot \(8_{18}\) has the cofinal detection property, but does not satisfy Property (D) part (a).  We do this by showing that every slope is weakly order-detected by a boundary cofinal left-ordering, and a non-boundary-cofinal left-ordering.

    Let \(M_1\) denote the complement of the trefoil knot. We begin by noting that the trefoil knot satisfies Property (D) part (a): from \cite[Proposition 27]{CW12} it follows that the slope map \(s: \mathrm{LO}(\pi_1(M_1)) \rightarrow \mathcal{S}(M_1)\) is not surjective, and so by \cite[Theorem 1.7]{BC23} every left-ordering of \(\pi_1(M_1)\) is boundary-cofinal.  The image of the slope map is precisely \([-\infty, 1]\) \cite[Example 8.4]{BC23}. 

    Let \(M_3\) denote the complement of the knot \(8_{18}\).  According to \cite{KS05}, there is a surjective homomorphism \(\psi: \pi_1(M_3) \rightarrow \pi_1(M_1)\), and one can check that there are also choices of meridian and longitude \(\mu_i, \lambda_i \in \pi_1(M_i)\) for \(i =1, 3\) such that \(\psi(\mu_3) = \mu_1\) and \(\psi(\lambda_3) = \lambda_1\).  Therefore, using the short exact sequence
    \[
        1 \longrightarrow \ker(\psi) \longrightarrow \pi_1(M_3) \stackrel{\psi}{\longrightarrow} \pi_1(M_1) \longrightarrow 1
    \]
    and the observations of the previous paragraph, one can lexicographically create left-orderings of \(\pi_1(M_3)\) that are boundary cofinal, and which detect any given slope in \([-\infty, 1]\).  However, the knot \(8_{18}\) is also amphichiral, so there is an automorphism \(\xi: \pi_1(M_3) \rightarrow \pi_1(M_3)\) with \(\xi(\mu_3) = \mu_3\) and \(\xi(\lambda_3) = \lambda_3^{-1}\).  From this, if \(\mathfrak{o} \in \mathrm{LO}(\pi_1(M_3))\) satisfies \(s(\mathfrak{o}) = [\mu^p \lambda^q]\), then the left-ordering \(\mathfrak{o}'\) with positive cone \(P(\mathfrak{o}') := \xi(P(\mathfrak{o}))\) satisfies \(s(\mathfrak{o}') = [\mu^p \lambda^{-q}]\).  Consequently we know that all slopes in \(\mathcal{S}(M_3)\) are detected by boundary-cofinal left-orderings, so we conclude that \(8_{18}\) has the cofinal detection property.

    We next show that \(8_{18}\) does not satisfy Property (D) part (a).  Let \(M_2\) denote the complement of the figure eight knot.  By \cite[Example 5.7]{BC23}, for every slope \([\alpha] \in \mathcal{S}(M_2)\) there exists a left-ordering \(\mathfrak{o}\) relative to which the peripheral subgroup \(\pi_1(\partial M_2)\) is bounded and satisfying \(s(\mathfrak{o}) = [\alpha]\).
    By \cite{KS05} there exists a homomorphism \(\phi: \pi_1(M_3) \rightarrow \pi_1(M_2)\) and choices of meridian and longitude \(\mu_i, \lambda_i \in \pi_1(M_i)\) for \(i =2, 3\) such that \(\phi(\mu_3) = \mu_2\) and \(\phi(\lambda_3) = \lambda_2\).  Using the short exact sequence
    \[
        1 \longrightarrow \ker(\phi) \longrightarrow \pi_1(M_3) \stackrel{\phi}{\longrightarrow} \pi_1(M_2) \longrightarrow 1
    \]
    and the observations of the previous paragraph, one can lexicographically construct left-orderings of \(\pi_1(M_3)\) to show that every slope in \(\mathcal{S}(M_3)\) is weakly order-detected by a left-ordering that is not boundary cofinal.  In particular, \(8_{18}\) does not satisfy Property (D) part (a).

\end{example}




Before connecting the conjugate-slope property and the cofinal detection property with Property (D), we note that there is also a second property appearing in the literature that was introduced for the purpose of studying non-left-orderability of Dehn fillings.

\begin{definition}[\cite{CW11}]
    Let \(r \in \mathbb{Q}\) be a positive rational number and \(M\) the complement of \(K\).  Set \(S_r = \{ \mu^p \lambda^q \mid \frac{p}{q} \geq r \}\).  We say that \(K\) is \emph{\(r\)-decayed} if, for every positive cone \(P \subset \pi_1(M)\), whenever \(P \cap S_r \neq \emptyset\) then \(S_r \subset P\).
\end{definition}

With all of the required definitions in hand, we have the following relationships.

\begin{theorem}
    \label{thm:not_weak_detection}
    Let \(M\) denote the complement of a nontrivial knot \(K\) in \(S^3\).  Then the following implications hold.

    \begin{enumerate}
        \item If \(K\) has the conjugate-slope property at \(n\) and the cofinal detection property, then \(K\) is \(r\)-decayed for all \(r >n\).
        \item The knot \(K\) is \(r\)-decayed if and only if no slope in \([r, \infty)\) is weakly order-detected.
    \end{enumerate}
\end{theorem}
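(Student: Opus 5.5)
I would prove part (2) first and then deduce part (1) from it.

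For (2), the plan is to translate \(r\)-decay into a statement about the slope of the restricted ordering on the peripheral subgroup \(\pi_1(\partial M)\cong\ZZ^2\). Fix a positive cone \(P\) and let \(L=L(\ooo|_{\pi_1(\partial M)})\). The set \(S_r\) is the set of lattice points \(\mu^p\lambda^q\) with \(p/q\ge r\), that is, the lattice points in the closed cone between the rays through \((r,1)\) and \((-r,-1)\) that contains \(\mu=(1,0)\). (The sign convention for \(q=0\) and negative \(q\) should be fixed to match the paper's usage, where \(\mu\in S_r\) and \(\mu^{-1}\in S_r\).) Since \(S_r\) contains \(\mu\) and \(\mu^{-1}\), if \(P\cap S_r\ne\emptyset\) then \(S_r\subset P\) can never hold. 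So \(K\) is \(r\)-decayed if and only if \(P\cap S_r=\emptyset\) for every \(P\); but \(\mu\) or \(\mu^{-1}\) always lies in \(P\). This tells me I must be misreading \(S_r\). The intended set is presumably \(\{\mu^p\lambda^q: q>0,\ p/q\ge r\}\) together with its natural sign convention, a half-cone \(C_r\) in \(\RR^2\) with \(\mu^r\lambda\) on one side. I would adopt that reading.

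With \(S_r\) read as a convex cone of lattice points strictly contained in an open half-plane, the claim becomes elementary planar geometry. A line \(L\) through the origin either avoids the closed cone \(C_r\) (except at the origin), or passes through it. In the first case, \(P\cap\ZZ^2\) contains all of \(S_r\) or none of it. In the second case, \(L\) has slope in \([r,\infty)\) (including \(\infty\), the meridian direction, handled separately), and lattice points of \(S_r\) lie on both sides of \(L\), or on \(L\) with the ordering choosing a sign. So \(S_r\) is split between \(P\) and \(P^{-1}\). Hence \(r\)-decay fails exactly when some ordering has \(s(\ooo)\in[r,\infty)\), which is (2). The fiddly point is the boundary case where \(L\) is exactly the slope \(r\) and contains lattice points of \(S_r\). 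Even then, \(S_r\) contains points strictly on both sides of \(L\): for example \(\mu\) and points with \(p/q=r\) are separated, or at least interior points exist on one side while \(\mu^{\pm}\)-direction points lie on the other. I would check this carefully.

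For (1), suppose \(K\) has the conjugate-slope property at \(n\) and the cofinal detection property, and let \(r>n\). By (2), it suffices to show that no slope in \([r,\infty)\subset(n,\infty)\) is weakly order-detected. If some slope there were weakly order-detected, the cofinal detection property would make it detected by a boundary-cofinal left-ordering. This contradicts Proposition \ref{prop:cofinal_not_allowed}. The main obstacle is therefore purely in (2), namely pinning down the precise convention for \(S_r\) and treating the degenerate slopes (slope exactly \(r\), and the meridional slope \(\infty\), which is in \([r,\infty)\) under the projective convention) so that the equivalence is exact rather than off by a boundary case.
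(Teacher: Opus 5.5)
\textbf{Verdict: genuine gap.} Your proof of part (1) is correct, and so is the direction ``no slope in \([r,\infty)\) is weakly order-detected \(\Rightarrow\) \(r\)-decayed'' of (2). This agrees with the paper. Read \(S_r=\{\mu^p\lambda^q : q>0,\ p/q\ge r\}\), which is what the paper's proof implicitly uses. Then any ordering that splits \(S_r\) must have slope in \([r,\infty)\), and (1) follows from this direction alone together with the cofinal detection property and Proposition \ref{prop:cofinal_not_allowed}.

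On conventions, \(\infty\) must be \emph{excluded} from \([r,\infty)\). Because \(S_r\) lies strictly on one side of the line through \(\mu\), an ordering of meridional slope never splits \(S_r\). If you counted \(\infty\), statement (2) would fail for the trefoil. Its slope image is \([-\infty,1]\), so the meridional slope is weakly detected, yet it is \(r\)-decayed for all \(r>1\).

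The gap is the implication ``\(r\)-decayed \(\Rightarrow\) the slope \(r\) itself is not weakly detected''. You claim that when \(s(\mathfrak{o})=r\), the set \(S_r\) still has points strictly on both sides of \(L\). This is false.
\begin{itemize}
\item Write \(r=a/b\) with \(b>0\) and set \(\alpha=\mu^a\lambda^b\). The cone \(\{q\ge 0,\ p\ge rq\}\) lies in one closed half-plane bounded by \(L\).
\item The powers \(\alpha^k\), \(k>0\), lie on \(L\). Every other element of \(S_r\) lies strictly on the side containing \(\mu\). Only \(\mu^{-1}\) is on the other side, and it is not in \(S_r\).
\item So whether \(\mathfrak{o}\) splits \(S_r\) depends only on the sign \(\mathfrak{o}\) happens to give \(\alpha\). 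If that sign agrees with the rest, then \(S_r\subset P(\mathfrak{o})\) or \(S_r\subset P(\mathfrak{o})^{-1}\), and \(\mathfrak{o}\) witnesses nothing.
\end{itemize}

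No planar argument can repair this. You must build a \emph{new} ordering of the same slope in which \(\alpha\) has the opposite sign. The given ordering does not let you reverse \(\alpha\) inside \(G\), because there need not be an \(\mathfrak{o}\)-convex subgroup meeting \(\pi_1(\partial M)\) in exactly \(\langle\alpha\rangle\).

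The missing ingredient, used by the paper, is \cite[Proposition 5.2]{BC23}. It gives a relatively convex subgroup \(C\) with \(C\cap\pi_1(\partial M)=\langle\alpha\rangle\). Order \(G\) lexicographically:
\begin{itemize}
\item First use a \(G\)-invariant order on \(G/C\). This orders \(\pi_1(\partial M)/\langle\alpha\rangle\cong\mathbb{Z}\), so all of \(S_r\setminus\langle\alpha\rangle\) gets a common sign.
\item Then use an ordering of \(C\) chosen so that \(\alpha\) gets the opposite sign.
\end{itemize}
The result is an ordering \(\mathfrak{o}'\) with \(s(\mathfrak{o}')=[\alpha]\), \(P(\mathfrak{o}')\cap S_r\ne\emptyset\) and \(S_r\not\subset P(\mathfrak{o}')\). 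This contradicts \(r\)-decay and closes the endpoint case.
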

\begin{proof}
    We first prove \((1)\).  To this end, let \(P \subset \pi_1(M)\) be a positive cone with associated left-ordering \(\mathfrak{o}\), and recall that \(S_r = \{ \mu^p \lambda^q \mid \frac{p}{q} \geq r \}\) for all positive \(r \in \mathbb{Q}\).  Suppose that \(r>n\) and that  \(P \cap S_r\) is nonempty and \(S_r \not \subset P\).  Then we may choose \(\mu^{p_1}\lambda^{q_1}, \mu^{p_2} \lambda^{q_2} \in S_r\) such that \(\mu^{p_1}\lambda^{q_1} <_\mathfrak{o} id <_\mathfrak{o} \mu^{p_2} \lambda^{q_2}\), without loss of generality we may assume that \(r \leq \frac{p_1}{q_1} < \frac{p_2}{q_2}\).  Then \(s(\mathfrak{o}) \in [\frac{p_1}{q_1}, \frac{p_2}{q_2}] \subset [r, \infty)\).  By assumption, there exists a boundary-cofinal ordering \(\mathfrak{o}'\) of \(\pi_1(M)\) with \(s(\mathfrak{o}') = s(\mathfrak{o}) \in [r, \infty) \subset (n, \infty)\), which contradicts Proposition \ref{prop:cofinal_not_allowed}.

    To prove claim (2), we first follow \cite[Proof of Theorem 3]{CW11}, and \cite[Proof of Theorem 9]{CW12} to deal with slopes in \((r, \infty)\).  Suppose \(K\) is \(r\)-decayed and that \(\mathfrak{o}\) is an ordering of \(\pi_1(M)\) with \(s(\mathfrak{o}) \in (r, \infty)\).  Then we may choose \(\mu^{p_1}\lambda^{q_1}, \mu^{p_2} \lambda^{q_2} \in S_r\) such that \(\mu^{p_1}\lambda^{q_1} <_\mathfrak{o} id <_\mathfrak{o} \mu^{p_2} \lambda^{q_2}\), so that \(P(\mathfrak{o}) \cap S_r\) is nonempty and \(S_r \not \subset P(\mathfrak{o})\), a contradiction. Next suppose that \(\mathfrak{o}\) is an ordering of \(\pi_1(M)\) with \(s(\mathfrak{o}) = [\mu^p \lambda^q]\) and set \(r = p/q\).  By \cite[Proposition 5.2]{BC23}, there exists a relatively convex subgroup \(C \subset \pi_1(M)\) with \(C \cap \pi_1(\partial M) = \langle \mu^p \lambda^q \rangle\).  Using this convex subgroup, we can lexicographically construct a left-ordering \(\mathfrak{o}'\) of \(\pi_1(M)\) such that \(s(\mathfrak{o}') = [\mu^p \lambda^q]\) and \(P(\mathfrak{o}') \cap S_r  = \{ (\mu^p \lambda^q)^k \mid k>0\}\), a contradiction.

    Conversely, if no slope in \([r, \infty)\) is weakly order-detected, then in every ordering \(\mathfrak{o}\) of \(\pi_1(M)\) the elements of \(S_r\) must have the same sign. Correspondingly, either \(S_r \subset P(\mathfrak{o})\) or \(S_r \subset P(\mathfrak{o})^{-1}\), implying that \(K\) is \(r\)-decayed.
\end{proof}

\begin{theorem}
    \label{thm:con_slope_to_D}
    For a nontrivial knot \(K\) in \(S^3\), the following implications hold:
    \begin{enumerate}
        \item If \(K\) has the conjugate-slope property at \(2g(K)-1\) and the cofinal detection property, then \(K\) has Property (D).
        \item If \(K\) has property (D), then \(K\) is \(r\)-decayed for every \(r> 2g(K)-1\).
    \end{enumerate}

\end{theorem}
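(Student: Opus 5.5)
For part (1), Property (D) has two parts, and I will treat them separately. Part (b) is the easier one. The conjugate-slope property at \(2g(K)-1\) says \(N(\mu^{2g(K)-1}\lambda)\cap N(\mu)\neq\emptyset\). Lemma \ref{lem:semigroup_equivalence} applies because \(2g(K)-1\neq 0\) for a nontrivial knot, and it gives \(id\in N(\mu^{2g(K)-1}\lambda,\mu^{-1})\). Since \(N(S)\subset T(S)\), we get \(id\in T(\mu^{2g(K)-1}\lambda,\mu^{-1})\). Lemma \ref{lem:root_closed} applies since \(2g(K)-1>0\), and it upgrades this to \(\mu\in T(\mu^{2g(K)-1}\lambda,\mu^{-1})\), which is Property (D) part (b).

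Part (a) is the main obstacle, because the cofinal detection property is only a statement about slopes. Suppose \(\rho:\pi_1(M)\to\mathrm{Homeo}_+(\RR)\) has a point \(t\) fixed by \(\rho(\mu)\) and \(\rho(\lambda)\) but not by all of \(\rho(\pi_1(M))\). I would first reduce to the case where \(\rho\) has no global fixed points. Pass to the component of \(\RR\setminus\mathrm{Fix}(\rho(\pi_1(M)))\) that contains \(t\); it is an open interval homeomorphic to \(\RR\). On it the action has no global fixed point, while the peripheral subgroup still fixes \(t\). Next, build a left-ordering \(\mathfrak{o}\) of \(\pi_1(M)\) by comparing orbits of a dense sequence of points starting with \(t\), breaking ties lexicographically; this is the standard construction. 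In this ordering \(\pi_1(\partial M)\) is bounded, since it sits inside the stabiliser of \(t\), which is a convex subgroup not equal to \(\pi_1(M)\). This alone does not contradict cofinal detection. I therefore plan to play the two halves of the hypothesis against each other. The stabiliser of \(t\) acts on the half-lines \((t,\infty)\) and \((-\infty,t)\), and these actions let us choose the ordering of \(\pi_1(\partial M)\) so that its slope can be moved into \((2g(K)-1,\infty)\). For example, compose with a flip of the germ at \(t\), or use \cite[Proposition 5.2]{BC23} to realise any slope compatible with the stabiliser's action. Cofinal detection then produces a boundary-cofinal ordering weakly detecting a slope \(>2g(K)-1\), contradicting Proposition \ref{prop:cofinal_not_allowed} applied with \(n=2g(K)-1\).

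If this slope-adjustment step cannot be made to work in general, my fallback is an argument that avoids orderings. The identity \(\mu\in T(\mu^{2g-1}\lambda,\mu^{-1})\), together with Lemma \ref{lem:cofinal_products} applied to the dynamics of \(\rho\) restricted to the \(\rho(\pi_1(M))\)-invariant interval, would force \(\rho(\mu)\) or \(\rho(\mu^{2g-1}\lambda)\) to have no fixed point there, which contradicts the fact that they fix \(t\). The catch is that Lemma \ref{lem:cofinal_products} needs cofinality, and establishing that is exactly where cofinal detection has to be used. I expect this step to need the most care.

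Part (2) is mostly formal. Property (D) part (a) implies that every left-ordering of \(\pi_1(M)\) is boundary-cofinal; the paper notes this just before Definition \ref{def:cofinal}. Part (b) gives \(\mu\in T(\mu^{2g-1}\lambda,\mu^{-1})\), and hence \(id\in T(\mu^{2g-1}\lambda,\mu^{-1})\). Now take any left-ordering \(\mathfrak{o}\) with \(s(\mathfrak{o})\in(2g-1,\infty)\). Then \(\mu^{2g-1}\lambda\) and \(\mu^{-1}\) are both cofinal and of the same sign. The proof of Lemma \ref{lem:cofinal_products} shows that the set of elements \(h\) with \(\rho(h)(x)>x\) for all \(x\) (or \(<x\) for all \(x\)) is a root-closed, conjugacy-closed semigroup, so it contains all of \(T(\mu^{2g-1}\lambda,\mu^{-1})\). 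That is impossible, because \(id\) lies in this semigroup and moves no point. Hence no slope in \((2g-1,\infty)\) is weakly order-detected. For any \(r>2g-1\) we have \([r,\infty)\subset(2g-1,\infty)\), so Theorem \ref{thm:not_weak_detection}(2) shows that \(K\) is \(r\)-decayed.
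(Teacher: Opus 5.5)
Your argument for Property (D) part (b) (Lemma \ref{lem:semigroup_equivalence}, then \(N\subset T\), then Lemma \ref{lem:root_closed}) and your proof of part (2) are correct and match the paper. So is your observation that a representation violating part (a) yields a left-ordering in which \(\pi_1(\partial M)\) lies in a proper convex subgroup, the stabiliser of \(t\).

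The gap is the slope-adjustment step in part (1)(a). A flip, meaning reversing the ordering on a convex subgroup such as the stabiliser of \(t\), does not change the slope at all. The slope \(s(\ooo)\) only records the line \(L(\ooo)\), not which side of it is positive. Also, \cite[Proposition 5.2]{BC23} runs the wrong way: it takes a slope that is already weakly detected and produces a relatively convex subgroup. To move the slope into \((2g(K)-1,\infty)\) you would need an ordering of a proper convex subgroup containing \(\pi_1(\partial M)\) whose restriction to \(\pi_1(\partial M)\) has a prescribed slope, and nothing in your sketch produces one. Your fallback has the same hole, as you suspected. Lemma \ref{lem:cofinal_products} needs \(\mu^{-1}\) and \(\mu^{2g-1}\lambda\) to be cofinal of the same sign, and this fails for the \(\rho\) at hand, since both fix \(t\).

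The missing idea is to avoid the bad ordering altogether and argue globally, as the paper does. Suppose some left-ordering weakly detected a slope in \((2g(K)-1,\infty)\). Cofinal detection would then give a boundary-cofinal ordering detecting the same slope, contradicting Proposition \ref{prop:cofinal_not_allowed} with \(n=2g(K)-1\). So no slope in that interval is weakly detected, and the slope map \(s:\LO(\pi_1(M))\to\mathcal{S}(M)\) is not surjective. By \cite[Theorem 1.7]{BC23}, every left-ordering of \(\pi_1(M)\) is therefore boundary-cofinal, and together with your construction this gives part (a). What you were effectively trying to prove by hand is that a single ordering with bounded peripheral subgroup forces surjectivity of the slope map. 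That is the contrapositive of the cited theorem, and it does not come from a local modification of the ordering near \(t\).
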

\begin{proof}
    To show (1), we first note that part (b) of Definition \ref{propertyD} follows automatically from the assumption that \(K\) has the conjugate-slope property at \(2g(K)-1\).  To show part (a) of Definition \ref{propertyD}, suppose that \(\mathfrak{o}\) is a left-ordering of \(\pi_1(M)\) with \(s(\mathfrak{o}) \in (2g(K)-1, \infty)\).  By assumption there exists \(\mathfrak{o}'
    \) that is boundary-cofinal with \(s(\mathfrak{o}') = s(\mathfrak{o})\), which is not possible by Proposition \ref{prop:cofinal_not_allowed}.  Therefore the slope map \(s : \mathrm{LO}(\pi_1(M)) \rightarrow \mathcal{S}(M)\) is not surjective, and so every left-ordering of \(\pi_1(M)\) is boundary-cofinal by \cite[Theorem 1.7]{BC23}.  As already noted, part (a) of Definition \ref{propertyD} is equivalent to the claim that every left-ordering of \(\pi_1(M)\) is boundary-cofinal, and so Property (D) holds.

    For (2), since part (a) of Definition \ref{propertyD} implies that every left-ordering of \(\pi_1(M)\) is boundary-cofinal, one can proceed exactly as in the proof of Theorem \ref{thm:not_weak_detection}(1). (\textit{cf.} \cite[Proposition 8.3]{BGHpreprint}).
\end{proof}

We do not know whether or not the converse implications of Theorem \ref{thm:con_slope_to_D} and Theorem \ref{thm:not_weak_detection}(1) hold.

\section{Consequences of the conjugate-slope property}
\label{sec:consequences}

\subsection{Orderability, Dehn fillings and the L-space conjecture}

The relationships outlined in the previous section are intended for use in addressing the left-orderability aspect of the L-space conjecture.  The connection comes from the following observation: Let \(M\) denote the complement of a knot in \(S^3\) and \([\alpha] \in \mathcal{S}(M)\). Use $M(\alpha)$ to denote the result of Dehn filling $M$ along the slope $\alpha$. If \(\pi_1(M(\alpha))\) is left-orderable, then \([\alpha]\) is order-detected, and thus weakly order-detected \cite[Corollary 1.10]{BC23}. Therefore if we are able to show that \([\alpha]\) is not weakly order-detected, then \(\pi_1(M(\alpha))\) is not left-orderable.

In light of this, we have the following theorem.

\begin{theorem}
    \label{thm:nonLO}
    Suppose that \(K\) is a nontrivial knot in \(S^3\) with complement \(M\).  If \(K\) has the conjugate-slope property at \(n\) and the cofinal detection property, then \(\pi_1(M(\mu^p\lambda^q))\) is not left-orderable for all \(p/q \in [n, \infty)\).
\end{theorem}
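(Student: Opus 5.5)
The plan is to split the interval \([n,\infty)\) into the open part \((n,\infty)\), handled by weak order-detection, and the endpoint slope \(n\), which needs a separate dynamical argument.

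\textbf{Slopes in \((n,\infty)\).} Let \(p/q > n\) and pick a rational \(r\) with \(n<r\le p/q\). By Theorem \ref{thm:not_weak_detection}(1), the conjugate-slope property at \(n\) together with the cofinal detection property make \(K\) \(r\)-decayed. By Theorem \ref{thm:not_weak_detection}(2), no slope in \([r,\infty)\) is weakly order-detected, so in particular \([\mu^p\lambda^q]\) is not. If \(\pi_1(M(\mu^p\lambda^q))\) were left-orderable, then \([\mu^p\lambda^q]\) would be order-detected, hence weakly order-detected, by \cite[Corollary 1.10]{BC23}. This is a contradiction.

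\textbf{Upgrading to Property (D) part (a).} Since \(n\ge 1\), the previous step shows that the slope map \(s:\LO(\pi_1(M))\to\mathcal S(M)\) misses every slope in \((n,\infty)\), so it is not surjective. By \cite[Theorem 1.7]{BC23}, every left-ordering of \(\pi_1(M)\) is then boundary-cofinal. Equivalently, as in the proof of Theorem \ref{thm:con_slope_to_D}(1), \(K\) satisfies Property (D) part (a): for any \(\rho:\pi_1(M)\to\mathrm{Homeo}_+(\RR)\), a common fixed point of \(\rho(\mu)\) and \(\rho(\lambda)\) is a global fixed point.

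\textbf{The endpoint \(p/q=n\).} Let \(Q=\pi_1(M(\mu^n\lambda))\), and suppose for contradiction that \(Q\) is left-orderable. It is nontrivial, so let \(\rho_0:Q\to\mathrm{Homeo}_+(\RR)\) be the dynamic realisation of a left-ordering; it has no global fixed point. Set \(\rho=\rho_0\circ\pi\), where \(\pi:\pi_1(M)\to Q\) is the filling quotient. Since \(\mu^n\lambda\) maps to the identity in \(Q\), applying \(\pi\) to the conjugate-slope equation \((\ast)\) gives
\[ id = \prod_{j=1}^\ell \pi(g_j)^{-1}\bar\mu\,\pi(g_j) \quad\text{in } Q, \]
where \(\bar\mu=\pi(\mu)\).

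First, \(\rho(\mu)\) must have a fixed point. If not, say \(\rho(\mu)(x)>x\) for all \(x\) (the other case is symmetric). The argument of Lemma \ref{lem:cofinal_products} then shows that every conjugate of \(\rho(\mu)\), and every product of such conjugates, moves every point strictly upward. This is impossible for the identity.

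So let \(t\) be a fixed point of \(\rho(\mu)\). Since \(\pi(\lambda)=\bar\mu^{-n}\), the point \(t\) is also fixed by \(\rho(\lambda)\). By Property (D)(a), \(t\) is then fixed by \(\rho(\pi_1(M))=\rho_0(Q)\), which contradicts the choice of \(\rho_0\).

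The step I expect to be most delicate is this endpoint case. Weak detection alone cannot rule out slope \(n\), since Proposition \ref{prop:cofinal_not_allowed} only excludes \((n,\infty)\). The argument therefore needs the global fact that all orderings are boundary-cofinal, and it needs the observation that \(\mu^n\lambda\) dies in the filling, so that the conjugate-slope relation becomes a generalised-torsion-type relation for \(\bar\mu\).
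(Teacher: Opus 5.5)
Your proof is correct and follows the paper's structure. Slopes in $(n,\infty)$ are ruled out via Theorem~\ref{thm:not_weak_detection} together with \cite[Corollary 1.10]{BC23}. At the endpoint, the conjugate-slope relation forces the image of $\mu$ to be non-cofinal in any ordering of $\pi_1(M(\mu^n\lambda))$, which contradicts the fact that every left-ordering of $\pi_1(M)$ is boundary-cofinal (\cite[Theorem 1.7]{BC23}).

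The only difference is in packaging the final step. The paper lifts the ordering of the filled group lexicographically to an ordering of $\pi_1(M)$ with bounded peripheral subgroup. You instead pass to the dynamic realisation and use the equivalent Property (D) part (a) formulation, with $\rho(\lambda)=\rho(\mu)^{-n}$ showing that a fixed point of $\rho(\mu)$ is fixed by the peripheral subgroup and hence is a global fixed point.
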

\begin{proof}
    First, suppose that \(p/q \in (n, \infty)\).  In light of the discussion preceding this proof, if \(\pi_1(M(\mu^p\lambda^q))\) were left-orderable, then \([\mu^p \lambda^q]\) would be weakly order-detected.  But this contradicts Theorem \ref{thm:not_weak_detection}.

    It remains to consider the case where \(p/q =n\).  To this end, suppose that \(\pi_1(M(\mu^n \lambda))\) admits a left-ordering \(\mathfrak{o}\), and that
    \[ \prod_{i=1}^m g_i^{-1}(\mu^n \lambda)g_i = \prod_{j=1}^{nm} h_j^{-1} \mu h_j,
    \]
    and let \(q: \pi_1(M) \rightarrow \pi_1(M)/\langle \langle \mu^n \lambda \rangle \rangle \cong \pi_1(M(\mu^n \lambda))\) denote the quotient map. Suppose that \(q(\mu)\) is \(\mathfrak{o}\)-cofinal.

    Then \(\prod_{j=1}^{nm} q(h_j^{-1}) q(\mu)  q(h_j)\) is \(\mathfrak{o}\)-cofinal by Lemma \ref{lem:cofinal_products}, and yet by the equality above, \[\prod_{j=1}^{nm} q(h_j^{-1}) q(\mu)  q(h_j) = id \in \pi_1(M(\mu^n \lambda)),
    \]
    a contradiction.

    In particular, since \(q(\mu)\) is bounded in the left-ordering \(\mathfrak{o}\), we may use \(\mathfrak{o}\) and the short exact sequence
    \[ 1 \longrightarrow \langle \langle \mu^n \lambda  \rangle \rangle \longrightarrow \pi_1(M) \stackrel{q}{\longrightarrow} \pi_1(M(\mu^n \lambda)) \longrightarrow 1
    \]
    to construct a left-ordering of \(\pi_1(M)\) in which \(\pi_1(\partial M)\) is bounded.  But since Theorem \ref{thm:not_weak_detection} says no slope in \((n, \infty)\) is weakly order-detected, \cite[Theorem 1.7]{BC23} implies that every left-ordering of \(\pi_1(M)\) must be boundary cofinal.  This contradiction finishes the proof.
\end{proof}

From this theorem, a plausible approach to understanding non-left-orderability of fundamental groups arising from Dehn fillings emerges.  The knots which enjoy both the conjugate-slope property and the cofinal detection property will have many Dehn fillings with non-left-orderable fundamental group, and so we conjecture:

\begin{conjecture}
    A nontrivial knot \(K\) is an L-space knot if and only if it has both the conjugate-slope property and the cofinal detection property.
\end{conjecture}

\subsection{Generalised torsion and Dehn fillings}

Recall that if \(G\) is a group, then \(g \in G\) is called \emph{generalised torsion} if there exist \(h_1, \dots, h_n \in G\) such that
\[ \prod_{i=1}^n h_i^{-1}g h_i = id.
\]
The significance of generalised torsion in \(3\)-manifold groups was already noted in \cite{KT17}, where they conjectured that for a \(3\)-manifold \(M\), \(\pi_1(M)\) is bi-orderable if and only if it is generalised torsion-free.  This conjecture was investigated in the context of manifolds arising from Dehn surgery along knots in \(S^3\) \cite{IKT21}.  While their conjecture has since been disproved \cite[Section 5]{CC25}, understanding when the meridian becomes generalised torsion in the Dehn-filled manifold appears to be connected, via the conjugate-slope property, to the L-space conjecture.

\begin{theorem}
    \label{thm:gen_torsion}
    Suppose that \(M\) is the exterior of a knot in \(S^3\), and let \(\{\mu, \lambda\}\) denote a choice of meridian and longitude basis for the peripheral subgroup of \(\pi_1(M)\).  Suppose that either:
    \begin{enumerate}
        \item There exist \(g_1, \ldots, g_k \in \pi_1(M)\) and \(n>0\) such that \(\mu^n \lambda = \prod_{i=1}^k g_i \mu g_i^{-1}\), or
        \item There exist \(g_1, \ldots, g_k, h \in \pi_1(M)\) and \(n>0\) such that \(\mu^n \lambda (h \mu^n \lambda h^{-1}) = \prod_{i=1}^k g_i \mu g_i^{-1}\).
    \end{enumerate}
    Then for all rational slopes \(\alpha \in [n, \infty)\), the image of the meridian is generalised torsion in \(\pi_1(M(\alpha))\).
\end{theorem}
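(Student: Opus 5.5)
We work with a rational slope \(\alpha = [\mu^p\lambda^q]\) with \(q>0\), \(\gcd(p,q)=1\), and \(p/q \geq n\); the slope \(\infty\) gives \(S^3\) and is excluded or trivial. Write \(\bar{x}\) for the image of \(x\) in \(\pi_1(M(\alpha)) = \pi_1(M)/\langle\langle \mu^p\lambda^q\rangle\rangle\). The plan is to reduce to the case \(q=1\), \(p\) an integer \(\geq n\). Then we use the computation in the proof of the first proposition of Section \ref{sec:con_slope}, which raises the exponent of \(\mu\), to pass from \(n\) to \(p/q\). Finally we push the identity into the quotient, where the left-hand side becomes trivial.

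\textbf{Integral case.} In case (1), \(\mu^n\lambda\) is itself a product of conjugates of \(\mu\); multiply on the left by \(\mu^{m-n}\) to get \(\mu^m\lambda \in N(\mu)\) for every integer \(m\geq n\). Case (2) is analogous. In \(\mu^n\lambda\, h\mu^n\lambda h^{-1}\), rewrite \(\mu^n\lambda = \mu^{-(m-n)}\mu^m\lambda\), and commute the extra \(\mu^{m-n}\) powers to the outside by conjugation, exactly as in the proof of that proposition. This yields an element of the form \(\mu^m\lambda\,h'\mu^m\lambda h'^{-1}\) that lies in \(N(\mu)\). Passing to the quotient by \(\langle\langle\mu^m\lambda\rangle\rangle\), the left-hand side becomes \(id\), so \(id = \prod \bar g_i\bar\mu\bar g_i^{-1}\) with \(k\geq 1\). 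This is exactly the statement that \(\bar\mu\) is generalised torsion. One also needs \(k\ge 1\), i.e. the product is nonempty. This follows from abelianization: the left side maps to \(m\) or \(2m>0\), so \(k = m\) or \(2m\).

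\textbf{Non-integral slopes.} Here the plan is to work in the peripheral torus. We want some product of conjugates of \(\mu^p\lambda^q\) to lie in \(N(\mu)\), since that product dies in \(\pi_1(M(\alpha))\). Start with \(x=\mu^n\lambda\) (case (1)) and raise it to the \(q\)-th power. Because \(\mu,\lambda\) commute, \(x^q = \mu^{nq}\lambda^q\) lies in \(N(\mu)\) as a product of \(q\) copies of an element of \(N(\mu)\). Then \(\mu^{p-nq}x^q = \mu^p\lambda^q\) is also in \(N(\mu)\), since \(p-nq\geq 0\) and powers of \(\mu\) lie in \(N(\mu)\). In case (2), set \(y=\mu^n\lambda\) and \(z=hyh^{-1}\), so that \(yz\in N(\mu)\). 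Here we use that \((yz)^q\) can be rearranged modulo conjugation, together with the fact that \(y^q\) and \(z^q = h y^q h^{-1}\) are what we want. The cleaner route is to consider \(y^q\cdot h y^q h^{-1}\) directly. Write \(y^q z^q\) as a product of the \(q\) elements \(y^{j}(yz)y^{-j}\), interleaved in a telescoping manner. Concretely, \(y^q z^q\) is not literally such a product, so instead I would prove, by induction on \(q\), that \(y^q h y^q h^{-1} \in N(\mu)\cdot N(\mu)^{\text{conj}}\) via the identity
\[ y^{j+1}zz^{j}\cdots \]
This is the step I expect to be the main obstacle, because \(y\) and \(z\) do not commute. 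If it fails, the fallback is to use the left-ordering-free trick from the integral case. Namely, handle \(\alpha\) with \(p/q\geq n\) by observing that in \(\pi_1(M(\alpha))\) the element \(\bar\mu^{p}\bar\lambda^{q}=id\), and try to show that \(\bar y^q\) is a negative power of \(\bar\mu\) times a product of conjugates of \(\bar\mu\) while \(\bar y\) commutes with \(\bar \mu\). Once \(\mu^p\lambda^q\cdot c\in N(\mu)\) for some \(c\) in the normal closure of \(\mu^p\lambda^q\), the conclusion follows as in the integral case, with the abelianization again guaranteeing a nonempty product.
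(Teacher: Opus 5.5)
Your case (1) argument is correct and is exactly the paper's: \(\mu^p\lambda^q=\mu^{p-nq}(\mu^n\lambda)^q\in N(\mu)\) because \(p-nq\ge 0\). Your integral-slope argument for case (2) is also valid. With \(y=\mu^n\lambda\) and \(z=hyh^{-1}\), one has \(\mu^m\lambda\,h\mu^m\lambda h^{-1}=\mu^{m-n}\,(yh\mu^{m-n}h^{-1}y^{-1})\,(yz)\). But it is only a special case. Nothing proved for integral fillings transfers to \(\mu^p\lambda^q\) with \(q>1\), so the announced ``reduction to \(q=1\)'' is not a reduction.

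The genuine gap is case (2) for \(q>1\). What you need is \(y^qz^q\in N(\mu)\) for every \(q\ge 1\), and your text breaks off exactly there (``via the identity \(y^{j+1}zz^j\cdots\)''). The fallback does not close it. In \(\pi_1(M(\alpha))\) you do get \(\bar y^{\,q}=\bar\mu^{\,nq-p}\), but the hypothesis only controls the product \(\bar y\bar z\). Since \((\bar y\bar z)^q\neq\bar y^{\,q}\bar z^{\,q}\), you still need precisely the missing fact to produce a product of conjugates of \(\bar\mu\) equal to the identity.

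The non-commutativity of \(y\) and \(z\) is harmless, because conjugation absorbs it:
\[
y^{q+1}z^{q+1}=\bigl(y\,(y^qz^q)\,y^{-1}\bigr)\,(yz),
\]
so by induction \(y^qz^q\in N(\mu)\) for all \(q\ge 1\). The paper runs the same induction in the form \(y^{\ell+1}z^{\ell+1}=z^{-1}(zy)(y^\ell z^\ell)z\), using \(zy=z(yz)z^{-1}\in N(\mu)\).

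With this in hand, since \(\mu\) and \(\lambda\) commute,
\[
\mu^p\lambda^q\,(h\mu^p\lambda^q h^{-1})=\mu^{p-nq}\,(y^qz^q)\,(h\mu^{p-nq}h^{-1})\in N(\mu).
\]
Here \(p-nq\ge 0\), and the left-hand side is trivial in \(\pi_1(M(\alpha))\). Abelianizing shows the resulting product has \(2p\ge 1\) conjugates of \(\bar\mu\), so \(\bar\mu\) is generalised torsion.
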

\begin{proof}
    Let \(n>0\) and first suppose that \(\mu^n \lambda = \prod_{i=1}^k g_i \mu g_i^{-1}\).  Given \(p/q \in [n, \infty)\) with \(p, q >0\) and relatively prime, note that
    \[
        \mu^p \lambda^q = (\mu^n \lambda)^q \mu^{p-nq} = (\prod_{i=1}^k g_i \mu g_i^{-1})^q\mu^{p-nq}.
    \]
    It follows that the image of \(\mu\) is generalised torsion in the quotient \(\pi_1(M(\mu^p \lambda^q)) \cong \pi_1(M)/
    \langle \langle \mu^p \lambda^q \rangle \rangle\).

    On the other hand, suppose that \(\mu^n \lambda (h \mu^n \lambda h^{-1}) = \prod_{i=1}^k g_i \mu g_i^{-1}\).  Set
    \[ P = (h \mu^n \lambda h^{-1})\prod_{i=1}^k g_i \mu g_i^{-1}(h \mu^n \lambda h^{-1})^{-1},
    \]
    which is a product of conjugates of \(\mu\),
    and note that \((h \mu^n \lambda h^{-1})\mu^n \lambda = P\). Therefore
    \[ (h \mu^n \lambda h^{-1})\mu^{2n} \lambda^2 (h \mu^n \lambda h^{-1}) = P \prod_{i=1}^k g_i \mu g_i^{-1}
    \] and thus
    \[ \mu^{2n} \lambda^2 (h \mu^{2n} \lambda^2 h^{-1}) = (h \mu^n \lambda h^{-1})^{-1}P \prod_{i=1}^k g_i \mu g_i^{-1}(h \mu^n \lambda h^{-1}),
    \]
    note that the right-hand side is a product of conjugates of \(\mu\).  In general, one can apply induction: suppose that \(\mu^{n \ell} \lambda^{\ell} (h \mu^{n \ell} \lambda^{ \ell} h^{-1}) = Q\) is a product of conjugates of \(\mu\) for \(\ell >0\), then
    \[   (h \mu^n \lambda h^{-1})\mu^{n(\ell + 1)} \lambda^{\ell+1} (h \mu^{n \ell} \lambda^{ \ell} h^{-1})= PQ
    \]
    so that
    \[\mu^{n(\ell + 1)} \lambda^{\ell+1} (h \mu^{n (\ell+1)} \lambda^{ \ell+1} h^{-1}) = (h \mu^n \lambda h^{-1})^{-1}PQ(h \mu^n \lambda h^{-1}).
    \]
    By induction we conclude that \(\mu^{n \ell} \lambda^{\ell} (h \mu^{n \ell} \lambda^{ \ell} h^{-1})\) can be written as a product of conjugates of \(\mu\) for all \(\ell >0\).

    Now let  \(p/q \in [n, \infty)\) with \(p, q >0\) and relatively prime.  Then write
    \[\mu^{n q} \lambda^{q} (h \mu^{n q} \lambda^{ q} h^{-1}) = R
    \]
    where \(R\) is a product of conjugates of \(\mu\), and note that
    \[ \mu^{p} \lambda^{q} (h \mu^{p} \lambda^{q}h^{-1}) =
        \mu^{p-nq}(\mu^{n q} \lambda^{q} (h \mu^{n q} \lambda^{ q} h^{-1}) )(h \mu^{p-nq} h^{-1}) = \mu^{p-nq}R(h \mu^{p-nq} h^{-1}).
    \]
    Since the left-hand side is trivial in \(\pi_1(M(\mu^p \lambda^q))\), we conclude from the right-hand side that the image of \(\mu\) is generalised torsion in \(\pi_1(M(\mu^p \lambda^q))\).
\end{proof}

In particular, Theorem \ref{thm:gen_torsion} applies to all of the knots that have the conjugate-slope property as a consequence of Theorem \ref{thm:con_slope_property} or Propositions \ref{prop:one_bridge} or \ref{prop:two_bridges}.  We suspect that generalised torsion is connected to the conjugate-slope property in a more general setting, and offer the next observation as evidence.

\begin{proposition}
    Suppose that \(M\) is the exterior of a knot \(K\) in \(S^3\), and that \(K\) has the conjugate-slope property at \(n\).  Then for all \(m>n\), \(m \in \mathbb{N}\), the image of the meridian is generalised torsion in \(\pi_1(M(\mu^m \lambda))\).
\end{proposition}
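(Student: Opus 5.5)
Since \(K\) has the conjugate-slope property at \(n\), there is an equation
\[ \prod_{i=1}^k h_i^{-1}(\mu^n\lambda)h_i = \prod_{j=1}^\ell g_j^{-1}\mu g_j \]
in \(\pi_1(M)\). For \(m>n\) I will rewrite each conjugate \(h_i^{-1}\mu^n\lambda h_i\) as \((h_i^{-1}\mu^{m}\lambda h_i)(h_i^{-1}\mu^{-(m-n)}h_i)\), using that \(\mu\) and \(\lambda\) commute. Then I pass to the quotient \(q:\pi_1(M)\to \pi_1(M(\mu^m\lambda))\). In the quotient every factor \(q(h_i^{-1}\mu^m\lambda h_i)\) is trivial, so the left-hand side becomes
\[ \prod_{i=1}^k q(h_i)^{-1}q(\mu)^{-(m-n)}q(h_i). \]
This gives the equation
\[ \prod_{i=1}^k q(h_i)^{-1}q(\mu)^{-(m-n)}q(h_i) = \prod_{j=1}^\ell q(g_j)^{-1}q(\mu)q(g_j). \]

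Next I move the left-hand side across. Multiplying on the left by its inverse gives
\[ id = \prod_{i=k}^{1} q(h_i)^{-1}q(\mu)^{m-n}q(h_i)\cdot \prod_{j=1}^\ell q(g_j)^{-1}q(\mu)q(g_j). \]
Since \(m-n>0\), each \(q(\mu)^{m-n}\) expands as \(m-n\) copies of the same conjugate of \(q(\mu)\). The right-hand side is therefore a nonempty product of conjugates of \(q(\mu)\), with \(k(m-n)+\ell\geq 1\) factors, and equals the identity. This is exactly the statement that \(q(\mu)\) is generalised torsion in \(\pi_1(M(\mu^m\lambda))\).

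The only point needing care is nondegeneracy. Generalised torsion is meant to exclude the case \(q(\mu)=id\), which would make the relation vacuous. If \(q(\mu)=id\), then \(\pi_1(M(\mu^m\lambda))\) would be normally generated by the image of \(\mu\), hence trivial. But its abelianisation is \(\mathbb{Z}/m\) with \(m>n\geq 1\), and \(m\geq 2\), so the group is nontrivial. Hence \(q(\mu)\neq id\) and the relation is genuine.

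I expect no real obstacle here; the main thing is to check that the sign of the exponent \(m-n\) comes out positive after rearranging. Note that this argument needs strictly \(m>n\). At \(m=n\) the \(\mu\)-power term disappears, and one would instead need a nonempty right-hand side \(\ell\geq1\), which holds anyway, so the same argument also covers that case.
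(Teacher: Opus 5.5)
Your proof is correct and is essentially the paper's argument: write \(\mu^n\lambda=(\mu^m\lambda)\mu^{-(m-n)}\), kill \(\mu^m\lambda\) in the quotient, and move the resulting conjugates of \(\mu^{-(m-n)}\) to the other side. Your check that \(q(\mu)\neq id\) via \(H_1(M(\mu^m\lambda))\cong\mathbb{Z}/m\) is a detail the paper leaves implicit. It does not cover your aside at \(m=n=1\), however, since there \(H_1\) is trivial and nondegeneracy would instead need Property P (Kronheimer--Mrowka).
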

\begin{proof}
    Suppose that \(\prod_{i=1}^k g_i^{-1} \mu^n \lambda g_i = \prod_{j=1}^\ell h_j^{-1} \mu h_j\) and let \(\phi: \pi_1(M) \rightarrow \pi_1(M(\mu^m \lambda))\) denote the quotient map.  Applying this map to both sides of the given equation, and using \(\phi(\mu^m \lambda) = id\), we arrive at
    \[
        \prod_{i=1}^k \phi(g_i)^{-1} \phi(\mu^{n-m})  \phi(g_i)  =  \prod_{j=1}^\ell \phi(h_j)^{-1} \phi(\mu) \phi(h_j).
    \]
    Bringing all of the terms on the left-hand side over to the right, we arrive at the conclusion.
\end{proof}

It is also not difficult to see that if \(K\) has the conjugate-slope property at \(n\), the image of \(\mu\) is generalised torsion in \(\pi_1(M(\mu^p \lambda))\) for all \(p>n\).  A natural generalisation is the following question.

\begin{question}
    If \(K\) has the conjugate-slope property at \(n\), is the image of \(\mu\) generalised torsion in \(\pi_1(M(\mu^p \lambda^q))\) for all \(p/q>n\)?
\end{question}

\bibliographystyle{alpha} 
\bibliography{idea.bib} 
\end{document}